\documentclass{amsart}%
\usepackage{amssymb}
\usepackage{amsfonts}
\usepackage{amsmath}
\usepackage{epsfig}
\usepackage{graphicx}%
\setcounter{MaxMatrixCols}{30}
%TCIDATA{OutputFilter=latex2.dll}
%TCIDATA{Version=5.00.0.2570}
%TCIDATA{CSTFile=amsartci.cst}
%TCIDATA{Created=Tuesday, December 14, 2021 15:39:19}
%TCIDATA{LastRevised=Sunday, May 01, 2022 19:20:14}
%TCIDATA{<META NAME="GraphicsSave" CONTENT="32">}
%TCIDATA{<META NAME="SaveForMode" CONTENT="1">}
%TCIDATA{<META NAME="DocumentShell" CONTENT="Articles\SW\AMS Journal Article">}
%TCIDATA{Language=American English}
\newtheorem{theorem}{Theorem}
\theoremstyle{plain}

\newtheorem{lemma}{Lemma}
\newtheorem{notation}{Notation}

\newtheorem{proposition}{Proposition}
\newtheorem{remark}{Remark}

\numberwithin{equation}{section}
\begin{document}
\title[a $p-$adic Nagumo type equation]{Local well-posedness of the Cauchy problem for a $p-$adic Nagumo-type equation}
\author[Chac\'{o}n-Cort\'{e}s]{L. F. Chac\'{o}n-Cort\'{e}s}
\address{Pontificia Universidad Javeriana, Departamento de Matem\'{a}ticas, Cra. 7 N.
40-62, Bogot\'{a} D.C., Colombia}
\email{leonardo.chacon@javeriana.edu.co}
\author[Garcia-Bibiano]{C. A. Garcia-Bibiano}
\address{Centro de Investigaci\'{o}n y de Estudios Avanzados del Instituto
Polit\'{e}cnico Nacional. Departamento de Matem\'{a}ticas, Unidad
Quer\'{e}taro. Libramiento Norponiente \#2000, Fracc. Real de Juriquilla.
Santiago de Quer\'{e}taro, Qro. 76230. M\'{e}xico}
\email{cagarcia@math.cinvestav.mx}
\author[Z\'{u}\~{n}iga-Galindo]{W. A. Z\'{u}\~{n}iga-Galindo$^{1}$}
\address{University of Texas Rio Grande Valley. School of Mathematical \& Statistical
Sciences. One West University Blvd. Brownsville, TX 78520, United States}
\email{wilson.zunigagalindo@utrgv.edu}
\thanks{The third author was partially supported by the Lokenath Debnath Endowed
Professorship, UTRGV}
\subjclass[2000]{Primary 47G30, 35B44; Secondary 46E36, 32P05}
\keywords{$p-$adic analysis, pseudo-differential operators, Sobolev-type spaces, blow-up phenomenon.}

\begin{abstract}
We introduce a new family of $p-$adic non-linear evolution equations. We
establish the local well-posedness of the Cauchy problem for these equations
in Sobolev-type spaces. For a certain subfamily, we show that the blow-up
phenomenon occurs and provide numerical simulations showing this phenomenon.

\end{abstract}
\maketitle

\section{Introduction}

Nowadays, the theory of linear partial pseudo-differential equations for
complex-valued functions over $p-$adic fields is a well-established branch of
mathematical analysis, see e.g. \cite{A-K-S}-\cite{Chacon-Zuniga 2},
\cite{Haran}-\cite{Kochubei pacif}, \cite{Olesko-Khrennikov}%
-\cite{Rodriguez-Zuniga}, \cite{Torresblanca-Zuniga 1}%
-\cite{Zuniga-Nonlinearity}, and references therein. Meanwhile very little is
known about nonlinear $p-$adic equations. We can mention some semilinear
evolution equations solved using $p-$adic wavelets \cite{A-K-S},
\cite{Pourhadi et al}, a kind of equations of reaction-diffusion type and
Turing patterns studied in \cite{Zuniga-JMAA}, \cite{Zuniga-Nonlinearity}, a
$p-$adic analog of one of the porous medium equation
\cite{Khrennikov-Kochubei}, \cite{Olesko-Khrennikov}, the blow-up phenomenon
studied in \cite{Chacon et al}, and non-linear integro-differential equations
connected with $p-$adic cellular networks \cite{Zuniga-1}.

In this article we introduce a new family of nonlinear evolution equations
that we have named as $p-$adic Nagumo-type equations:%
\[
u_{t}=-\gamma\boldsymbol{D}_{x}^{\alpha}u-u^{3}+\left(  \beta+1\right)
u^{2}-\beta u+P(\boldsymbol{D}_{x})\left(  u^{m}\right)  \text{, }%
x\in\mathbb{Q}_{p}^{n},\ t\in\left[  0,T\right]  ,
\]
where $\gamma>0$, $\beta\geq0$, $\boldsymbol{D}_{x}^{\alpha}$, $\alpha>0$, is
the Taibleson operator, $m$ is a positive integer and $P(\boldsymbol{D}_{x})$
is an operator of degree $\delta$ of the form $P(\boldsymbol{D})=\sum
_{j=0}^{k}C_{j}\boldsymbol{D}^{\delta_{j}}$, where the $C_{j}\in\mathbb{R}$
and $\delta_{k}=\delta$. We establish the local well-posedness of the Cauchy
problem for these equations in Sobolev-type spaces, see Theorem \ref{Thorem1}.
For a certain subfamily, we show that the blow-up phenomenon occurs, \ see
Theorem \ref{Thorem2}, and we also provide numerical simulations showing this phenomenon.

The theory of Sobolev-type spaces use here was developed in \cite{Zuniga-JFAA}%
, see also \cite{Rodriguez-Zuniga}, \cite{KKZuniga}. This theory is based in
the theory of countably Hilbert spaces of Gel'fand-Vilenkin
\cite{Gelfand-Vilenkin}. Some generalizations are presented in \cite{Gorka et
al 1}-\cite{Gorka et al 2}. We use classical techniques of operator
semigroups, see e.g. \cite{C-H}, \cite{Milan}. The family of evolution
equations studied here contains as a particular case, equations of the form%
\begin{equation}
u_{t}=-\gamma\boldsymbol{D}_{x}^{\alpha}u-u^{3}+\left(  \beta+1\right)
u^{2}-\beta u, \label{p-adic}%
\end{equation}
where $x\in\mathbb{Q}_{p}^{n},\ t\in\left[  0,T\right]  $, $\boldsymbol{D}%
_{x}^{\alpha}$ is the Taibleson operator, that resemble the classical
Nagumo-type equations, see e.g. \cite{Nagumo et al}.

In \cite{De la Cruz}, the authors study the equations%
\begin{equation}
u_{t}=Du_{xx}-u\left(  u-\kappa\right)  \left(  u-1\right)  -\varepsilon
u_{x}^{m}, \label{real}%
\end{equation}
where $D>0$, $\kappa\in\left(  0,\frac{1}{2}\right)  $, $\varepsilon>0$,
$x\in\mathbb{R}$, $t>0$. They establish the local well-posedness of the Cauchy
problem for these equations in standard Sobolev spaces. There are several
crucial differences between (\ref{p-adic})\ and (\ref{real}). The operators
$u_{xx}$, $u_{x}^{m}$ are local while the operators $\boldsymbol{D}%
_{x}^{\alpha}$, $P(\boldsymbol{D}_{x})\left(  \cdot^{m}\right)  $ are
non-local. The $p-$adic heat equation $u_{t}=-\gamma\boldsymbol{D}_{x}%
^{\alpha}u$ has an arbitrary order of pseudo-differentiability $\alpha>0$ in
the spatial variable, while in the classical fractional heat equation
$u_{t}=D\frac{\partial^{\mu}u}{\partial x^{\mu}}$, the degree of
pseudo-differentiability $\mu\in\left(  0,2\right]  $. This implies that the
Markov processes attached to $u_{t}=-\gamma\boldsymbol{D}_{x}^{\alpha}u$ are
completely different to the ones attached to $u_{t}=Du_{xx}$. In other words,
the diffusion mechanisms in (\ref{p-adic})\ and (\ref{real}) are completely
different. Notice that our non-linear term involves pseudo-derivatives of
arbitrary order $P(\boldsymbol{D}_{x})\left(  u^{m}\right)  $, while in
\cite{De la Cruz} only of first order $u_{x}^{m}$. \ Of course, the $p-$adic
Sobolev spaces behave completely different from their real counterparts, but
the semigroup techniques are the same in both cases, since time is a
non-negative real variable.

The article is organized as follows. In section \ref{Section_2}, we review
some basic aspects of the $p-$adic analysis and fix the notation. In section
\ref{Section_3}, we present some technical results about Sobolev-type spaces
and $p-$adic pseudo-differential operators. In section \ref{Section_4}, we
show the local well-posedness of the $p-$adic Nagumo-type equations, see
Theorem \ref{Thorem1}. In section \ref{Section_5}, we show a subfamily of
$p-$adic Nagumo-type equations whose solutions blow-up in finite time, see
Theorem \ref{Thorem2}. In section \ref{Section_6}, we present a numerical
simulation showing the blow-up phenomenon.

\section{\label{Section_2}$p-$Adic Analysis: Essential Ideas}

In this section, we collect some basic results on $p-$adic analysis that we
use through the article. For a detailed exposition the reader may consult
\cite{A-K-S}, \cite{Kochubei}, \cite{Taibleson}, \cite{V-V-Z}.

\subsection{The field of $p-$adic numbers}

Along this article $p$ will denote a prime number. The field of $p-$adic
numbers $%
%TCIMACRO{\U{211a} }%
%BeginExpansion
\mathbb{Q}
%EndExpansion
_{p}$ is defined as the completion of the field of rational numbers
$\mathbb{Q}$ with respect to the $p-$adic norm $|\cdot|_{p}$, which is defined
as
\[
\left\vert x\right\vert _{p}=\left\{
\begin{array}
[c]{lll}%
0 & \text{if} & x=0\\
&  & \\
p^{-\gamma} & \text{if} & x=p^{\gamma}\frac{a}{b}\text{,}%
\end{array}
\right.
\]
where $a$ and $b$ are integers coprime with $p$. The integer $\gamma:=ord(x)$,
with $ord(0):=+\infty$, is called the\textit{\ }$p-$\textit{adic order of} $x$.

Any $p-$adic number $x\neq0$ has a unique expansion of the form
\[
x=p^{ord(x)}\sum_{j=0}^{\infty}x_{j}p^{j},
\]
where $x_{j}\in\{0,\dots,p-1\}$ and $x_{0}\neq0$. By using this expansion, we
define \textit{the fractional part of }$x\in\mathbb{Q}_{p}$, denoted
$\{x\}_{p}$, as the rational number
\[
\left\{  x\right\}  _{p}=\left\{
\begin{array}
[c]{lll}%
0 & \text{if} & x=0\text{ or }ord(x)\geq0\\
&  & \\
p^{ord(x)}\sum_{j=0}^{-ord_{p}(x)-1}x_{j}p^{j} & \text{if} & ord(x)<0.
\end{array}
\right.
\]

\subsection{Topology of $\mathbb{Q}_{p}^{n}$}

For $r\in\mathbb{Z}$, denote by $B_{r}^{n}(a)=\{x\in\mathbb{Q}_{p}%
^{n};||x-a||_{p}\leq p^{r}\}$ \textit{the ball of radius }$p^{r}$ \textit{with
center at} $a=(a_{1},\dots,a_{n})\in\mathbb{Q}_{p}^{n}$, and take $B_{r}%
^{n}(0):=B_{r}^{n}$. Note that $B_{r}^{n}(a)=B_{r}(a_{1})\times\cdots\times
B_{r}(a_{n})$, where $B_{r}(a_{i}):=\{x_{i}\in\mathbb{Q}_{p};|x_{i}-a_{i}%
|_{p}\leq p^{r}\}$ is the one-dimensional ball of radius $p^{r}$ with center
at $a_{i}\in\mathbb{Q}_{p}$. The ball $B_{0}^{n}$ equals the product of $n$
copies of $B_{0}=\mathbb{Z}_{p}$, \textit{the ring of }$p-$\textit{adic
integers}. We also denote by $S_{r}^{n}(a)=\{x\in\mathbb{Q}_{p}^{n}%
;||x-a||_{p}=p^{r}\}$ \textit{the sphere of radius }$p^{r}$ \textit{with
center at} $a=(a_{1},\dots,a_{n})\in\mathbb{Q}_{p}^{n}$, and take $S_{r}%
^{n}(0):=S_{r}^{n}$. We notice that $S_{0}^{1}=\mathbb{Z}_{p}^{\times}$ (the
group of units of $\mathbb{Z}_{p}$), but $\left(  \mathbb{Z}_{p}^{\times
}\right)  ^{n}\subsetneq S_{0}^{n}$. The balls and spheres are both open and
closed subsets in $\mathbb{Q}_{p}^{n}$. In addition, two balls in
$\mathbb{Q}_{p}^{n}$ are either disjoint or one is contained in the other.

As a topological space $\left(  \mathbb{Q}_{p}^{n},||\cdot||_{p}\right)  $ is
totally disconnected, i.e. the only connected \ subsets of $\mathbb{Q}_{p}%
^{n}$ are the empty set and the points. A subset of $\mathbb{Q}_{p}^{n}$ is
compact if and only if it is closed and bounded in $\mathbb{Q}_{p}^{n}$, see
e.g. \cite[Section 1.3]{V-V-Z}, or \cite[Section 1.8]{A-K-S}. The balls and
spheres are compact subsets. Thus $\left(  \mathbb{Q}_{p}^{n},||\cdot
||_{p}\right)  $ is a locally compact topological space.

Since $(\mathbb{Q}_{p}^{n},+)$ is a locally compact topological group, there
exists a Haar measure $d^{n}x$, which is invariant under translations, i.e.
$d^{n}(x+a)=d^{n}x$. If we normalize this measure by the condition
$\int_{\mathbb{Z}_{p}^{n}}dx=1$, then $d^{n}x$ is unique.

\begin{notation}
We will use $\Omega\left(  p^{-r}||x-a||_{p}\right)  $ to denote the
characteristic function of the ball $B_{r}^{n}(a)$. For more general sets, we
will use the notation $1_{A}$ for the characteristic function of a set $A$.
\end{notation}

\subsection{The Bruhat-Schwartz space}

A complex-valued function $\varphi$ defined on $\mathbb{Q}_{p}^{n}$ is
\textit{called locally constant} if for any $x\in\mathbb{Q}_{p}^{n}$ there
exist an integer $l(x)\in\mathbb{Z}$ such that%
\begin{equation}
\varphi(x+x^{\prime})=\varphi(x)\text{ for any }x^{\prime}\in B_{l(x)}^{n}.
\label{local_constancy}%
\end{equation}
A function $\varphi:\mathbb{Q}_{p}^{n}\rightarrow\mathbb{C}$ is called a
\textit{Bruhat-Schwartz function (or a test function)} if it is locally
constant with compact support. Any test function can be represented as a
linear combination, with complex coefficients, of characteristic functions of
balls. The $\mathbb{C}-$vector space of Bruhat-Schwartz functions is denoted
by $\mathcal{D}(\mathbb{Q}_{p}^{n}):=\mathcal{D}$. We denote by $\mathcal{D}%
_{\mathbb{R}}(\mathbb{Q}_{p}^{n}):=\mathcal{D}_{\mathbb{R}}$\ the
$\mathbb{R}-$vector space of Bruhat-Schwartz functions. For $\varphi
\in\mathcal{D}(\mathbb{Q}_{p}^{n})$, the largest number $l=l(\varphi)$
satisfying (\ref{local_constancy}) is called \textit{the exponent of local
constancy (or the parameter of constancy) of} $\varphi$.

We denote by $\mathcal{D}_{m}^{l}(\mathbb{Q}_{p}^{n})$ the finite-dimensional
space of test functions from $\mathcal{D}(\mathbb{Q}_{p}^{n})$ having supports
in the ball $B_{m}^{n}$ and with parameters \ of constancy $\geq l$. We now
define a topology on $\mathcal{D}$ as follows. We say that a sequence
$\left\{  \varphi_{j}\right\}  _{j\in\mathbb{N}}$ of functions in
$\mathcal{D}$ converges to zero, if the two following conditions hold:

(1) there are two fixed integers $k_{0}$ and $m_{0}$ such that \ each
$\varphi_{j}\in$ $\mathcal{D}_{m_{0}}^{k_{0}}$;

(2) $\varphi_{j}\rightarrow0$ uniformly.

$\mathcal{D}$ endowed with the above topology becomes a topological vector space.

\subsection{$L^{\rho}$ spaces}

Given $\rho\in\lbrack1,\infty)$, we denote by $L^{\rho}:=L^{\rho}\left(
%TCIMACRO{\U{211a} }%
%BeginExpansion
\mathbb{Q}
%EndExpansion
_{p}^{n}\right)  :=L^{\rho}\left(
%TCIMACRO{\U{211a} }%
%BeginExpansion
\mathbb{Q}
%EndExpansion
_{p}^{n},d^{n}x\right)  ,$ the $\mathbb{C}-$vector space of all the
complex-valued functions $g$ satisfying
\[%
%TCIMACRO{\dint \limits_{\mathbb{Q} _{p}^{n}}}%
%BeginExpansion
{\displaystyle\int\limits_{\mathbb{Q} _{p}^{n}}}
%EndExpansion
\left\vert g\left(  x\right)  \right\vert ^{\rho}d^{n}x<\infty.
\]
The corresponding $\mathbb{R}-$vector spaces are denoted as $L_{\mathbb{R}%
}^{\rho}\allowbreak:=L_{\mathbb{R}}^{\rho}\left(
%TCIMACRO{\U{211a} }%
%BeginExpansion
\mathbb{Q}
%EndExpansion
_{p}^{n}\right)  =L_{\mathbb{R}}^{\rho}\left(
%TCIMACRO{\U{211a} }%
%BeginExpansion
\mathbb{Q}
%EndExpansion
_{p}^{n},d^{n}x\right)  $, $1\leq\rho<\infty$.

If $U$ is an open subset of $\mathbb{Q}_{p}^{n}$, $\mathcal{D}(U)$ denotes the
space of test functions with supports contained in $U$, then $\mathcal{D}(U)$
is dense in
\[
L^{\rho}\left(  U\right)  =\left\{  \varphi:U\rightarrow\mathbb{C};\left\Vert
\varphi\right\Vert _{\rho}=\left\{  \int\limits_{U}\left\vert \varphi\left(
x\right)  \right\vert ^{\rho}d^{n}x\right\}  ^{\frac{1}{\rho}}<\infty\right\}
,
\]
where $d^{n}x$ is the normalized Haar measure on $\left(  \mathbb{Q}_{p}%
^{n},+\right)  $, for $1\leq\rho<\infty$, see e.g. \cite[Section 4.3]{A-K-S}.
We denote by $L_{\mathbb{R}}^{\rho}\left(  U\right)  $ the real counterpart of
$L^{\rho}\left(  U\right)  $.

\subsection{The Fourier transform}

Set $\chi_{p}(y)=\exp(2\pi i\{y\}_{p})$ for $y\in\mathbb{Q}_{p}$. The map
$\chi_{p}(\cdot)$ is an additive character on $\mathbb{Q}_{p}$, i.e. a
continuous map from $\left(  \mathbb{Q}_{p},+\right)  $ into $S$ (the unit
circle considered as multiplicative group) satisfying $\chi_{p}(x_{0}%
+x_{1})=\chi_{p}(x_{0})\chi_{p}(x_{1})$, $x_{0},x_{1}\in\mathbb{Q}_{p}$. \ The
additive characters of $\mathbb{Q}_{p}$ form an Abelian group which is
isomorphic to $\left(  \mathbb{Q}_{p},+\right)  $. The isomorphism is given by
$\kappa\rightarrow\chi_{p}(\kappa x)$, see e.g. \cite[Section 2.3]{A-K-S}.

Given $\xi=(\xi_{1},\dots,\xi_{n})$ and $y=(x_{1},\dots,x_{n})\allowbreak
\in\mathbb{Q}_{p}^{n}$, we set $\xi\cdot x:=\sum_{j=1}^{n}\xi_{j}x_{j}$. The
Fourier transform of $\varphi\in\mathcal{D}(\mathbb{Q}_{p}^{n})$ is defined
as
\[
(\mathcal{F}\varphi)(\xi)=%
%TCIMACRO{\dint \limits_{\mathbb{Q} _{p}^{n}}}%
%BeginExpansion
{\displaystyle\int\limits_{\mathbb{Q} _{p}^{n}}}
%EndExpansion
\chi_{p}(\xi\cdot x)\varphi(x)d^{n}x\quad\text{for }\xi\in\mathbb{Q}_{p}^{n},
\]
where $d^{n}x$ is the normalized Haar measure on $\mathbb{Q}_{p}^{n}$. The
Fourier transform is a linear isomorphism from $\mathcal{D}(\mathbb{Q}_{p}%
^{n})$ onto itself satisfying
\begin{equation}
(\mathcal{F}(\mathcal{F}\varphi))(\xi)=\varphi(-\xi), \label{Eq_FFT}%
\end{equation}
see e.g. \cite[Section 4.8]{A-K-S}. We will also use the notation
$\mathcal{F}_{x\rightarrow\xi}\varphi$ and $\widehat{\varphi}$\ for the
Fourier transform of $\varphi$.

The Fourier transform extends to $L^{2}$. If $f\in L^{2},$ its Fourier
transform is defined as
\[
(\mathcal{F}f)(\xi)=\lim_{k\rightarrow\infty}\int\limits_{||x||_{p}\leq p^{k}%
}\chi_{p}(\xi\cdot x)f(x)d^{n}x,\quad\text{for }\xi\in%
%TCIMACRO{\U{211a} }%
%BeginExpansion
\mathbb{Q}
%EndExpansion
_{p}^{n},
\]
where the limit is taken in $L^{2}$. We recall that the Fourier transform is
unitary on $L^{2},$ i.e. $||f||_{2}=||\mathcal{F}f||_{2}$ for $f\in L^{2}$ and
that (\ref{Eq_FFT}) is also valid in $L^{2}$, see e.g. \cite[Chapter III,
Section 2]{Taibleson}.

\subsection{Distributions}

The $\mathbb{C}-$vector space $\mathcal{D}^{\prime}\left(  \mathbb{Q}_{p}%
^{n}\right)  $ $:=\mathcal{D}^{\prime}$ \ of all continuous linear functionals
on $\mathcal{D}(\mathbb{Q}_{p}^{n})$ is called the \textit{Bruhat-Schwartz
space of distributions}. Every linear functional on $\mathcal{D}$ is
continuous, i.e. $\mathcal{D}^{\prime}$\ agrees with the algebraic dual of
$\mathcal{D}$, see e.g. \cite[Chapter 1, VI.3, Lemma]{V-V-Z}. We denote by
$\mathcal{D}_{\mathbb{R}}^{\prime}\left(  \mathbb{Q}_{p}^{n}\right)  $
$:=\mathcal{D}_{\mathbb{R}}^{\prime}$ the dual space of $\mathcal{D}%
_{\mathbb{R}}$.

We endow $\mathcal{D}^{\prime}$ with the weak topology, i.e. a sequence
$\left\{  T_{j}\right\}  _{j\in\mathbb{n}}$ in $\mathcal{D}^{\prime}$
converges to $T$ if $\lim_{j\rightarrow\infty}T_{j}\left(  \varphi\right)
=T\left(  \varphi\right)  $ for any $\varphi\in\mathcal{D}$. \ The map
\[%
\begin{array}
[c]{lll}%
\mathcal{D}^{\prime}\times\mathcal{D} & \rightarrow & \mathbb{C}\\
\left(  T,\varphi\right)  & \rightarrow & T\left(  \varphi\right)
\end{array}
\]
is a bilinear form which is continuous in $T$ and $\varphi$ separately. We
call this map the pairing between $\mathcal{D}^{\prime}$ and $\mathcal{D}$.
From now on we will use $\left(  T,\varphi\right)  $ instead of $T\left(
\varphi\right)  $.

Every $f$\ in $L_{loc}^{1}$ defines a distribution $f\in\mathcal{D}^{\prime
}\left(  \mathbb{Q}_{p}^{n}\right)  $ by the formula
\[
\left(  f,\varphi\right)  =%
%TCIMACRO{\tint \limits_{\mathbb{Q}_{p}^{n}}}%
%BeginExpansion
{\textstyle\int\limits_{\mathbb{Q}_{p}^{n}}}
%EndExpansion
f\left(  x\right)  \varphi\left(  x\right)  d^{n}x.
\]
Such distributions are called \textit{regular distributions}. Notice that for
$f$\ $\in L_{\mathbb{R}}^{2}$, $\left(  f,\varphi\right)  =\left\langle
f,\varphi\right\rangle $, where $\left\langle \cdot,\cdot\right\rangle $
denotes the scalar product in $L_{\mathbb{R}}^{2}$.

\subsection{The Fourier transform of a distribution}

The Fourier transform $\mathcal{F}\left[  T\right]  $ of a distribution
$T\in\mathcal{D}^{\prime}\left(  \mathbb{Q}_{p}^{n}\right)  $ is defined by%
\[
\left(  \mathcal{F}\left[  T\right]  ,\varphi\right)  =\left(  T,\mathcal{F}%
\left[  \varphi\right]  \right)  \text{ for all }\varphi\in\mathcal{D}%
(\mathbb{Q}_{p}^{n})\text{.}%
\]
The Fourier transform $T\rightarrow\mathcal{F}\left[  T\right]  $ is a linear
(and continuous) isomorphism from $\mathcal{D}^{\prime}\left(  \mathbb{Q}%
_{p}^{n}\right)  $\ onto $\mathcal{D}^{\prime}\left(  \mathbb{Q}_{p}%
^{n}\right)  $. Furthermore, $T=\mathcal{F}\left[  \mathcal{F}\left[
T\right]  \left(  -\xi\right)  \right]  $.

\section{\label{Section_3}Sobolev-Type Spaces}

The Sobolev-type spaces used here were introduce in \cite{Zuniga-JFAA},
\cite{Rodriguez-Zuniga}. We follow here closely the presentation given in
\cite[Sections 10.1, 10.2]{KKZuniga}.

We set $\left[  \xi\right]  _{p}:=\max\left\{  1,\left\Vert \xi\right\Vert
_{p}\right\}  $ for $\xi=(\xi_{1},\ldots,\xi_{n})\in\mathbb{Q}_{p}^{n}$. Given
$\varphi,\varrho\in\mathcal{D}(\mathbb{Q}_{p}^{n})$ and $s\in\mathbb{R}$, we
define the scalar product:
\[
\langle\varphi,\varrho\rangle_{s}=\int\limits_{\mathbb{Q}_{p}^{n}}\left[
\xi\right]  _{p}^{s}\widehat{\varphi}(\xi)\overline{\widehat{\varrho}(\xi
)}d^{n}\xi,
\]
where the bar denotes the complex conjugate. We also set $\left\Vert
\varphi\right\Vert _{s}^{2}=\langle\varphi,\varphi\rangle_{s}$, and denote by
$\mathcal{H}_{s}:=\mathcal{H}_{s}(\mathbb{Q}_{p}^{n},\mathbb{C})=\mathcal{H}%
_{s}(\mathbb{C})$ the completion of $\mathcal{D}(\mathbb{Q}_{p}^{n})$ with
respect to $\langle\cdot,\cdot\rangle_{s}$. Notice that if $r,s\in\mathbb{R}$,
with $r\leq s$, then $\left\Vert \cdot\right\Vert _{r}\leq\left\Vert
\cdot\right\Vert _{s}$ and $\mathcal{H}_{s}\hookrightarrow\mathcal{H}_{r}$
(continuous embedding). In particular,
\[
\cdots\supset\mathcal{H}_{-2}\supset\mathcal{H}_{-1}\supset\mathcal{H}%
_{0}\supset\mathcal{H}_{1}\supset\mathcal{H}_{2}\cdots,
\]
where $\mathcal{H}_{0}=L^{2}$. We set
\[
\mathcal{H}_{\infty}(\mathbb{Q}_{p}^{n},\mathbb{C})=\mathcal{H}_{\infty
}:={\bigcap_{s\in\mathbb{N}}}\mathcal{H}_{s}.
\]
Since $\mathcal{H}_{[s]+1}\subseteq\mathcal{H}_{s}\subseteq\mathcal{H}_{[s]}$
for $s\in\mathbb{R}_{+}$, where $[\cdot]$ is the integer part function, then
$\mathcal{H}_{\infty}={\bigcap_{s\in\mathbb{R}_{+}}}\mathcal{H}_{s}$. With the
topology induced by the family of seminorms $\{\Vert\cdot\Vert_{l}%
\}_{l\in\mathbb{N}}$, $\mathcal{H}_{\infty}$ becomes a locally convex space,
which is metrizable. Indeed,%
\[
d(f,g):=\max_{l\in\mathbb{N}}\left\{  2^{-l}\frac{\Vert f-g\Vert_{l}}{1+\Vert
f-g\Vert_{l}}\right\}  ,\text{ with }f,g\in\mathcal{H}_{\infty},
\]
is a metric for the topology of $\mathcal{H}_{\infty}$ considered as a convex
topological space. The metric space $\left(  \mathcal{H}_{\infty},d\right)  $
is the completion of the metric space $(\mathcal{D}(\mathbb{Q}_{p}^{n}),d)$,
cf. \cite[Lemma 10.4]{KKZuniga}. Furthermore, $\mathcal{H}_{\infty}\subset
L^{\infty}\cap C^{\textup{unif}}\cap L^{1}\cap L^{2}$, and $\mathcal{H}%
_{\infty}(\mathbb{Q}_{p}^{n},\mathbb{C})$ is continuously embedded in
$C_{0}(\mathbb{Q}_{p}^{n},\mathbb{C})$. This is the non-Archimedean analog of
the Sobolev embedding theorem, cf. \cite[Theorem 10.15 ]{KKZuniga}.

\begin{lemma}
\label{Lemma1}If $s_{1}\leq s\leq s_{2}$, with $s=\theta s_{1}+(1-\theta
)s_{2}$, $0\leq\theta\leq1$, then $\left\Vert f\right\Vert _{s}\leq\left\Vert
f\right\Vert _{s_{1}}^{\theta}\left\Vert f\right\Vert _{s_{2}}^{(1-\theta)}$.
\end{lemma}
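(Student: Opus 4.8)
The plan is to prove this interpolation inequality directly from the definition of the Sobolev norms via Hölder's inequality applied to the Fourier-side integral. Since $\|f\|_s^2 = \int_{\mathbb{Q}_p^n} [\xi]_p^s |\widehat{f}(\xi)|^2 \, d^n\xi$, I would first write $[\xi]_p^s = [\xi]_p^{\theta s_1} \cdot [\xi]_p^{(1-\theta)s_2}$, using the hypothesis $s = \theta s_1 + (1-\theta)s_2$. Correspondingly split the integrand as
\[
[\xi]_p^s |\widehat{f}(\xi)|^2 = \left( [\xi]_p^{s_1} |\widehat{f}(\xi)|^2 \right)^{\theta} \left( [\xi]_p^{s_2} |\widehat{f}(\xi)|^2 \right)^{1-\theta},
\]
which is valid because $|\widehat{f}(\xi)|^2 = (|\widehat{f}(\xi)|^2)^{\theta} (|\widehat{f}(\xi)|^2)^{1-\theta}$.

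Next I would apply Hölder's inequality with conjugate exponents $1/\theta$ and $1/(1-\theta)$ (treating the boundary cases $\theta = 0$ and $\theta = 1$ trivially, since then the claimed inequality is an equality). This gives
\[
\int_{\mathbb{Q}_p^n} [\xi]_p^s |\widehat{f}(\xi)|^2 \, d^n\xi \leq \left( \int_{\mathbb{Q}_p^n} [\xi]_p^{s_1} |\widehat{f}(\xi)|^2 \, d^n\xi \right)^{\theta} \left( \int_{\mathbb{Q}_p^n} [\xi]_p^{s_2} |\widehat{f}(\xi)|^2 \, d^n\xi \right)^{1-\theta} = \|f\|_{s_1}^{2\theta} \|f\|_{s_2}^{2(1-\theta)}.
\]
Taking square roots yields $\|f\|_s \leq \|f\|_{s_1}^{\theta} \|f\|_{s_2}^{1-\theta}$, as desired.

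There is a density point to address: the inequality should first be established for $f \in \mathcal{D}(\mathbb{Q}_p^n)$, where $\widehat{f}$ is a genuine test function and all three integrals are manifestly finite, so Hölder applies without integrability concerns. Then I would extend to general $f \in \mathcal{H}_{s_1} \cap \mathcal{H}_{s_2}$ by approximation: if $\varphi_j \to f$ in both $\mathcal{H}_{s_1}$ and $\mathcal{H}_{s_2}$, then $\{\varphi_j\}$ is Cauchy in $\mathcal{H}_s$ by the inequality applied to differences $\varphi_j - \varphi_k$ (since $\mathcal{H}_{s_2} \hookrightarrow \mathcal{H}_s \hookrightarrow \mathcal{H}_{s_1}$, the limit in $\mathcal{H}_s$ coincides with $f$), and the inequality passes to the limit by continuity of the norms. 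Honestly, there is no real obstacle here — the only mild subtlety is making sure the factorization of the integrand is legitimate pointwise and that the degenerate cases $\theta \in \{0,1\}$ are dispatched separately; everything else is a one-line application of Hölder. I would present it compactly, perhaps even just for test functions with a remark that the general case follows by density.
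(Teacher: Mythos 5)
Your proof is correct and follows essentially the same route as the paper: factor $[\xi]_p^{s}|\widehat{f}(\xi)|^{2}$ as $\bigl([\xi]_p^{s_1}|\widehat{f}(\xi)|^{2}\bigr)^{\theta}\bigl([\xi]_p^{s_2}|\widehat{f}(\xi)|^{2}\bigr)^{1-\theta}$ and apply H\"older with exponents $1/\theta$ and $1/(1-\theta)$. The extra remarks on the degenerate cases $\theta\in\{0,1\}$ and on density are fine but not needed beyond what the paper does.
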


\begin{proof}
Take $f\in\mathcal{H}_{s}$, then by using the H\"{o}lder inequality for the
exponents $\frac{1}{q}=\theta,\frac{1}{q^{\prime}}=1-\theta$,
\begin{align*}
\left\Vert f\right\Vert _{s}^{2}  &  =%
%TCIMACRO{\dint \limits_{\mathbb{Q}_{p}^{n}}}%
%BeginExpansion
{\displaystyle\int\limits_{\mathbb{Q}_{p}^{n}}}
%EndExpansion
\left[  \xi\right]  _{p}^{s}\left\vert \widehat{f}\left(  \xi\right)
\right\vert ^{2}d^{n}\xi=%
%TCIMACRO{\dint \limits_{\mathbb{Q}_{p}^{n}}}%
%BeginExpansion
{\displaystyle\int\limits_{\mathbb{Q}_{p}^{n}}}
%EndExpansion
\left[  \xi\right]  _{p}^{\theta s_{1}+(1-\theta)s_{2}}\left\vert \widehat
{f}\left(  \xi\right)  \right\vert ^{2\left(  \theta+\left(  1-\theta\right)
\right)  }d^{n}\xi\\
&  =%
%TCIMACRO{\dint \limits_{\mathbb{Q}_{p}^{n}}}%
%BeginExpansion
{\displaystyle\int\limits_{\mathbb{Q}_{p}^{n}}}
%EndExpansion
\left(  \left[  \xi\right]  _{p}^{s_{1}}\left\vert \widehat{f}\left(
\xi\right)  \right\vert ^{2}\right)  ^{\theta}\left(  \left[  \xi\right]
_{p}^{s_{2}}\left\vert \widehat{f}\left(  \xi\right)  \right\vert ^{2}\right)
^{1-\theta}d^{n}\xi\\
&  \leq\left(
%TCIMACRO{\dint \limits_{\mathbb{Q}_{p}^{n}}}%
%BeginExpansion
{\displaystyle\int\limits_{\mathbb{Q}_{p}^{n}}}
%EndExpansion
\left[  \xi\right]  _{p}^{s_{1}}\left\vert \widehat{f}\left(  \xi\right)
\right\vert ^{2}d^{n}\xi\right)  ^{\theta}\left(
%TCIMACRO{\dint \limits_{\mathbb{Q}_{p}^{n}}}%
%BeginExpansion
{\displaystyle\int\limits_{\mathbb{Q}_{p}^{n}}}
%EndExpansion
\left[  \xi\right]  _{p}^{s_{2}}\left\vert \widehat{f}\left(  \xi\right)
\right\vert ^{2}d^{n}\xi\right)  ^{1-\theta}d^{n}\xi.
\end{align*}

\end{proof}

The following characterization of the spaces $\mathcal{H}_{s}$ and
$\mathcal{H}_{\infty}$ is useful:

\begin{lemma}
[{\cite[Lemma 10.8]{KKZuniga}}](i) $\mathcal{H}_{s}=\left\{  f\in
L^{2};\left\Vert f\right\Vert _{s}<\infty\right\}  =\left\{  T^{\prime}%
\in\mathcal{D};\left\Vert T\right\Vert _{s}<\infty\right\}  $, (ii)
$\mathcal{H}_{\infty}=\left\{  f\in L^{2};\left\Vert f\right\Vert _{s}%
<\infty\text{ for any }s\in\mathbb{R}_{+}\right\}  =\left\{  T^{\prime}%
\in\mathcal{D};\left\Vert T\right\Vert _{s}<\infty\text{ for any }%
s\in\mathbb{R}_{+}\right\}  $. The equalities in (i)-(ii) are in the sense of
vector spaces.
\end{lemma}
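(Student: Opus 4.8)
The plan is to move everything to the Fourier side, where the $\mathcal{H}_{s}$-norm becomes a weighted $L^{2}$-norm, and then to identify the abstract completion concretely. Write $\mu_{s}$ for the measure $[\xi]_{p}^{s}\,d^{n}\xi$ on $\mathbb{Q}_{p}^{n}$. On any ball $B_{r}^{n}$ one has $\min\{1,p^{rs}\}\le[\xi]_{p}^{s}\le\max\{1,p^{rs}\}$, so $\mu_{s}$ is a Radon measure that is comparable to $d^{n}\xi$ on every compact set. By the very definition of $\langle\cdot,\cdot\rangle_{s}$, the Fourier transform $\varphi\mapsto\widehat{\varphi}$ is an isometry of $(\mathcal{D}(\mathbb{Q}_{p}^{n}),\|\cdot\|_{s})$ onto a subspace of $L^{2}(\mathbb{Q}_{p}^{n},\mu_{s})$; hence its continuous extension identifies $\mathcal{H}_{s}$ isometrically with the closure of $\mathcal{D}(\mathbb{Q}_{p}^{n})$ in $L^{2}(\mathbb{Q}_{p}^{n},\mu_{s})$.

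First I would show this closure is everything, i.e. that $\mathcal{D}(\mathbb{Q}_{p}^{n})$ is dense in $L^{2}(\mathbb{Q}_{p}^{n},\mu_{s})$. Given $h\in L^{2}(\mu_{s})$, the truncations $h\,\Omega(p^{-k}\|\xi\|_{p})$ converge to $h$ in $L^{2}(\mu_{s})$ by dominated convergence; on the fixed ball $B_{k}^{n}$ the norms of $L^{2}(B_{k}^{n},\mu_{s})$ and $L^{2}(B_{k}^{n},d^{n}\xi)$ are equivalent by the comparability above, so the density of $\mathcal{D}(B_{k}^{n})$ in $L^{2}(B_{k}^{n},d^{n}\xi)$ recalled in Section \ref{Section_2} completes the approximation. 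Therefore $\mathcal{F}$ extends to a unitary isomorphism $\mathcal{H}_{s}\cong L^{2}(\mathbb{Q}_{p}^{n},\mu_{s})$.

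Next I would read off the two descriptions in (i). Any $g\in L^{2}(\mu_{s})$ lies in $L^{2}_{\mathrm{loc}}(d^{n}\xi)\subset L^{1}_{\mathrm{loc}}(d^{n}\xi)$, hence is a regular distribution, and $T:=\mathcal{F}^{-1}[g]\in\mathcal{D}'(\mathbb{Q}_{p}^{n})$ has $\widehat{T}=g$ and $\|T\|_{s}^{2}=\int_{\mathbb{Q}_{p}^{n}}[\xi]_{p}^{s}|g(\xi)|^{2}\,d^{n}\xi<\infty$; conversely every $T\in\mathcal{D}'$ whose Fourier transform is a function with $\|T\|_{s}<\infty$ arises this way. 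This yields $\mathcal{H}_{s}=\{T\in\mathcal{D}'(\mathbb{Q}_{p}^{n}):\widehat{T}\in L^{1}_{\mathrm{loc}},\ \|T\|_{s}<\infty\}$. For $s\ge0$ we further have $[\xi]_{p}^{s}\ge1$, so $\|g\|_{L^{2}}\le\|g\|_{L^{2}(\mu_{s})}$; thus $g\in L^{2}$ and, by unitarity of $\mathcal{F}$ on $L^{2}$, $T=\mathcal{F}^{-1}[g]\in L^{2}$ with $\|T\|_{s}<\infty$. Conversely if $f\in L^{2}$ and $\|f\|_{s}<\infty$ then $\widehat{f}\in L^{2}(\mu_{s})$, so $f\in\mathcal{H}_{s}$. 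This gives $\mathcal{H}_{s}=\{f\in L^{2}:\|f\|_{s}<\infty\}$ for $s\ge0$. Part (ii) then follows by intersecting over $s\in\mathbb{R}_{+}$, using the identity $\mathcal{H}_{\infty}=\bigcap_{s\in\mathbb{R}_{+}}\mathcal{H}_{s}$ already noted in the text.

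The delicate point, as with any abstract completion, is to be sure the canonical map from $\mathcal{H}_{s}$ into $\mathcal{D}'$ is injective, so that the completion adds no spurious elements; this is precisely what the isometry onto $L^{2}(\mu_{s})$ delivers, since a $\|\cdot\|_{s}$-null Cauchy sequence of test functions has $\widehat{\varphi_{j}}\to0$ in $L^{2}(\mu_{s})$, hence in $L^{1}_{\mathrm{loc}}$, forcing the limiting distribution to be $0$. Beyond that, only the comparison of $\mu_{s}$ with Lebesgue measure on balls and the routine truncation estimate need to be checked, and both are immediate from $[\xi]_{p}=\max\{1,\|\xi\|_{p}\}$.
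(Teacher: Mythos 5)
The paper does not prove this lemma at all; it is quoted verbatim from \cite[Lemma 10.8]{KKZuniga}, so there is no in-text argument to compare against. Your proof is correct and is essentially the standard one behind that reference: transport the $\|\cdot\|_{s}$-inner product to the Fourier side, identify $\mathcal{H}_{s}$ unitarily with the weighted space $L^{2}(\mathbb{Q}_{p}^{n},[\xi]_{p}^{s}d^{n}\xi)$ after checking that $\mathcal{D}$ is dense there (your truncation-plus-comparability argument, using the density of $\mathcal{D}(U)$ in $L^{2}(U)$ recalled in Section \ref{Section_2}, is exactly what is needed), and then use $L^{2}(\mu_{s})\subset L^{2}_{\mathrm{loc}}\subset L^{1}_{\mathrm{loc}}\subset\mathcal{D}'$ together with the injectivity and continuity of $\mathcal{F}^{-1}$ on $\mathcal{D}'$ to realize the completion faithfully inside $\mathcal{D}'$ — which also disposes of the ``spurious elements'' issue you rightly single out. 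Two small remarks. First, your closing sentence states the injectivity point slightly backwards (a norm-null Cauchy sequence trivially has distributional limit $0$; what faithfulness requires is that a $\|\cdot\|_{s}$-Cauchy sequence whose distributional limit is $0$ has $L^{2}(\mu_{s})$-limit $0$), but this follows at once from the same embedding $L^{2}(\mu_{s})\hookrightarrow\mathcal{D}'$, so it is a phrasing issue rather than a gap. Second, you correctly note that the description $\mathcal{H}_{s}=\{f\in L^{2}:\|f\|_{s}<\infty\}$ only holds for $s\geq 0$ (for $s<0$ that set is all of $L^{2}$, while $\mathcal{H}_{s}\supsetneq L^{2}$); the lemma as stated leaves this restriction implicit, and since the paper only invokes it for $s>n/2+2\delta>0$, your version covers everything that is actually used, including part (ii) by intersecting over $s\in\mathbb{R}_{+}$.
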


\begin{proposition}
\label{Prop1}If $s>n/2$, then $\mathcal{H}_{s}$ is a Banach algebra with
respect to the product of functions. That is, if $f,g\in\mathcal{H}_{s}$, then
$fg\in\mathcal{H}_{s}$ and $\left\Vert fg\right\Vert _{s}\leq C(n,s)\left\Vert
f\right\Vert _{s}\left\Vert g\right\Vert _{s}$, where $C(n,s)$\ is a positive constant.
\end{proposition}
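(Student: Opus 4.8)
The plan is to work on the Fourier side, where the $\mathcal{H}_s$-norm becomes a weighted $L^2$-norm and the product of functions becomes a convolution of their Fourier transforms. Concretely, writing $F=\widehat f$, $G=\widehat g$, we have $\widehat{fg}=F*G$, so that
\[
\|fg\|_s^2=\int_{\mathbb{Q}_p^n}[\xi]_p^{s}\,\Bigl|\int_{\mathbb{Q}_p^n}F(\xi-\eta)G(\eta)\,d^n\eta\Bigr|^2 d^n\xi .
\]
The goal is to dominate $[\xi]_p^{s}$ by a sum of the two "shifted" weights $[\xi-\eta]_p^{s}$ and $[\eta]_p^{s}$. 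Because $[\cdot]_p$ is built from the ultrametric norm, one has the clean submultiplicativity-type bound $[\xi]_p\le[\xi-\eta]_p\,[\eta]_p$ (indeed $\|\xi\|_p\le\max\{\|\xi-\eta\|_p,\|\eta\|_p\}\le\|\xi-\eta\|_p\|\eta\|_p$ once both are $\ge 1$, and the cases where one factor equals $1$ are immediate), hence $[\xi]_p^{s}\le[\xi-\eta]_p^{s}[\eta]_p^{s}$ for $s\ge 0$. This is cleaner than the Archimedean Peetre inequality and avoids the usual splitting into regions.

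First I would insert a factor of $1=[\xi-\eta]_p^{-s/2}[\xi-\eta]_p^{s/2}\cdot[\eta]_p^{-s/2}[\eta]_p^{s/2}$ inside the convolution integral and apply the Cauchy–Schwarz inequality in $\eta$, using the weight $[\eta]_p^{-s}[\xi-\eta]_p^{-s}$ as the "small" factor. The hypothesis $s>n/2$ enters exactly here: since $\int_{\mathbb{Q}_p^n}[\eta]_p^{-s}\,d^n\eta<\infty$ precisely when $s>n$... more carefully, one bounds $[\eta]_p^{-s}[\xi-\eta]_p^{-s}\le[\eta]_p^{-s}\cdot 1$ or symmetrically, but the sharp route is to note $[\xi-\eta]_p^{-s}+[\eta]_p^{-s}$ type estimates; the point is that $\int_{\mathbb{Q}_p^n}[\eta]_p^{-2\sigma}d^n\eta<\infty$ iff $2\sigma>n$, i.e. $\sigma>n/2$. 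Thus with $\sigma=s$ we get a finite constant $A(n,s)=\int_{\mathbb{Q}_p^n}[\eta]_p^{-s}d^n\eta$ (this integral is an elementary geometric-type sum over spheres $S_m^n$, $m\ge 1$, giving $A(n,s)=1+\sum_{m\ge 1}p^{-ms}(p^{mn}-p^{(m-1)n})<\infty$ exactly when $s>n$ — so one actually needs the bound in the form $[\eta]_p^{-s}[\xi-\eta]_p^{-s}\le\max([\eta]_p,[\xi-\eta]_p)^{-s}\cdot\min(\cdots)^{-s}$, and since $\max\ge[\xi]_p^{1/2}$-type control... ). Let me be honest: the clean statement is that Cauchy–Schwarz gives
\[
|F*G(\xi)|^2\le\Bigl(\int[\xi-\eta]_p^{-s}[\eta]_p^{-s}d^n\eta\Bigr)\Bigl(\int[\xi-\eta]_p^{s}|F(\xi-\eta)|^2\,[\eta]_p^{s}|G(\eta)|^2 d^n\eta\Bigr),
\]
and one needs the first integral bounded uniformly in $\xi$; splitting the domain into $\{\|\eta\|_p\le\|\xi-\eta\|_p\}$ and its complement reduces each piece to $\int[\eta]_p^{-s}d^n\eta$ times a harmless factor, and this is finite for $s>n/2$ when one also uses $[\xi]_p\le[\xi-\eta]_p[\eta]_p$ to absorb the extra weight. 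Then multiplying by $[\xi]_p^s\le[\xi-\eta]_p^s[\eta]_p^s$, integrating in $\xi$, and applying Fubini–Tonelli yields $\|fg\|_s^2\le C(n,s)\|f\|_s^2\|g\|_s^2$ with $C(n,s)=\sup_\xi\int[\xi-\eta]_p^{-s}[\eta]_p^{-s}d^n\eta$.

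The one genuine obstacle is the convergence of that weight integral: naively $\int_{\mathbb{Q}_p^n}[\eta]_p^{-s}\,d^n\eta$ needs $s>n$, not $s>n/2$, so the factor $[\xi]_p^{-s}$ (equivalently the extra weight we pulled out via $[\xi]_p^s\le[\xi-\eta]_p^s[\eta]_p^s$) must be genuinely used to split the weight as $[\xi-\eta]_p^{-s/2}\cdot[\eta]_p^{-s/2}$ effectively — i.e. the correct pairing in Cauchy–Schwarz keeps a half-power of each weight with the $F$, $G$ factors and leaves $[\xi-\eta]_p^{-s}[\eta]_p^{-s}[\xi]_p^{s}$ which, by ultrametric, is $\le[\eta]_p^{-s}$ on one region and $\le[\xi-\eta]_p^{-s}$ on the other, each integrable for $s>n$... . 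To land on exactly $s>n/2$ one instead writes $[\xi]_p^{s/2}[\xi-\eta]_p^{-s/2}[\eta]_p^{-s/2}$ as the bad factor and keeps $[\xi-\eta]_p^{s/2}|F|\cdot[\eta]_p^{s/2}|G|$ as the good factor, then bounds $[\xi]_p^{s/2}[\xi-\eta]_p^{-s/2}\le[\eta]_p^{s/2}$ (ultrametric again) to reduce the bad factor to $[\eta]_p^{-s/2}\cdot[\eta]_p^{s/2}[\eta]_p^{-s/2}$... . The bookkeeping here is the delicate part; I would carry it out by: (i) fixing $\xi$, splitting $\mathbb{Q}_p^n=\{\|\eta\|_p\le\|\xi\|_p\}\sqcup\{\|\eta\|_p>\|\xi\|_p\}$ (on the second set $\|\xi-\eta\|_p=\|\eta\|_p$ by the isosceles property); (ii) on each piece trading the $[\xi]_p$ weight against whichever of $[\eta]_p,[\xi-\eta]_p$ dominates, arriving at a one-variable integral $\int_{\mathbb{Q}_p^n}[\eta]_p^{-s}\,d^n\eta$ after using $s/2+s/2=s$ appropriately; and (iii) computing that last integral as the convergent series $1+(1-p^{-n})\sum_{m\ge1}p^{m(n-s)}$, finite iff $s>n$. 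If honest bookkeeping forces $s>n$ rather than $s>n/2$, the resolution (and what I expect the authors actually do) is that the relevant integrability is $\int[\eta]_p^{-2s'}$ with $2s'=s>n/2$ coming from pairing $[\eta]_p^{-s}$ against a single factor $|G(\eta)|\in L^2$ rather than against both — i.e. one uses the $L^2$-membership of $F$ and $G$ separately via Young/Cauchy–Schwarz so that only the square-integrability of $\eta\mapsto[\eta]_p^{-s}\widehat{(\cdot)}(\eta)$ is needed, and $[\eta]_p^{-s}\in L^2$ iff $s>n/2$. I would therefore set up the estimate as: $|F*G(\xi)|\le\|[\,\cdot\,]_p^{s}F\|_2\cdot\|[\xi-\cdot]_p^{-s}[\,\cdot\,]_p^{-s}\cdots$ — and track exactly which factors carry full weight. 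The constant $C(n,s)$ then equals (a power of) $\|[\,\cdot\,]_p^{-s}\|_2^2=1+(1-p^{-n})\sum_{m\ge1}p^{m(n-2s)}$, manifestly finite for $s>n/2$, which is the asserted $C(n,s)$.
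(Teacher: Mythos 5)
Your proposal is not a completed proof, and the gap sits exactly at its crux. Your primary route is the doubly-weighted Cauchy--Schwarz argument: use $[\xi]_p\le[\xi-\eta]_p[\eta]_p$, pull out the kernel $[\xi-\eta]_p^{-s}[\eta]_p^{-s}$, and finish by Fubini. For that to close you need the uniform bound
\[
\sup_{\xi\in\mathbb{Q}_p^n}\ [\xi]_p^{s}\int_{\mathbb{Q}_p^n}[\xi-\eta]_p^{-s}\,[\eta]_p^{-s}\,d^n\eta<\infty,
\]
which you never establish; and with the weights exactly as you wrote them it is in fact false for $n/2<s<n$: take $\Vert\xi\Vert_p=p^{k}$ with $k$ large; on the region $\Vert\eta\Vert_p\le p^{k-1}$ one has $\Vert\xi-\eta\Vert_p=\Vert\xi\Vert_p$, so the integrand reduces to $[\eta]_p^{-s}$ and the integral over that region alone grows like $p^{k(n-s)}$. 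You notice this tension yourself (``honest bookkeeping forces $s>n$''), but instead of resolving it the proposal trails off into a guess about ``what the authors actually do'' and a constant that is never derived. The rescue you gesture at --- pair the negative weight against a single Fourier factor, so that only square-integrability of $[\eta]_p^{-\cdot}$ (threshold $n/2$) is needed rather than integrability (threshold $n$) --- is indeed the right mechanism, but you do not carry it out, and your bookkeeping silently slides between the convention $\Vert g\Vert_s^2=\int[\xi]_p^{s}|\widehat g|^2d^n\xi$ (used in your displayed Cauchy--Schwarz, and in the paper's definition) and the convention with weight $[\xi]_p^{2s}$, which is the one under which ``$[\eta]_p^{-s}\in L^2$ iff $s>n/2$'' actually pairs against $\Vert g\Vert_s$. (In fairness, the paper's own proof has the same convention tension: with its literal weight $[\xi]_p^{s}$ its Cauchy--Schwarz step needs $s>n$, while the stated threshold $s>n/2$ matches the $[\xi]_p^{2s}$ normalization of the cited reference; with that normalization both the paper's argument and your kernel bound do close at $s>n/2$.)

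For comparison, the paper never touches the two-weight kernel. It uses the ultrametric inequality additively, $[\xi]_p^{s}\le[\xi-\eta]_p^{s}+[\eta]_p^{s}$, to get the pointwise splitting $[\xi]_p^{s/2}\,|\widehat{fg}(\xi)|\le\bigl([\cdot]_p^{s/2}|\widehat f|\bigr)\ast|\widehat g|(\xi)+|\widehat f|\ast\bigl([\cdot]_p^{s/2}|\widehat g|\bigr)(\xi)$, so that in each convolution one factor carries the full weight and the other carries none; then Young's inequality $\Vert u\ast v\Vert_2\le\Vert u\Vert_2\Vert v\Vert_1$ plus the embedding $\Vert\widehat g\Vert_1\le A(n,s)\Vert g\Vert_s$ (one Cauchy--Schwarz) gives $\Vert fg\Vert_s\le\Vert f\Vert_s\Vert\widehat g\Vert_1+\Vert g\Vert_s\Vert\widehat f\Vert_1\le 2A(n,s)\Vert f\Vert_s\Vert g\Vert_s$. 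To repair your submission, either write out that argument, or keep your multiplicative splitting but fix the norm convention and verify the kernel bound honestly; as it stands, the central estimate is missing.
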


\begin{proof}
By the ultrametric property of $\left\Vert \cdot\right\Vert _{p}$, $\left\Vert
\xi\right\Vert _{p}\leq\max\left\{  \left\Vert \xi-\eta\right\Vert
_{p},\left\Vert \eta\right\Vert _{p}\right\}  $ for $\xi,\eta\in\mathbb{Q}%
_{p}^{n}$, we have $\max\left\{  1,\left\Vert \xi\right\Vert _{p}\right\}
\leq\max\left\{  1,\left\Vert \xi-\eta\right\Vert _{p},\left\Vert
\eta\right\Vert _{p}\right\}  $, which implies that%
\[
\left[  \max\left\{  1,\left\Vert \xi\right\Vert _{p}\right\}  \right]
^{s}\leq\max\left\{  1,\left\Vert \xi-\eta\right\Vert _{p}^{s},\left\Vert
\eta\right\Vert _{p}^{s}\right\}  =\max\left\{  1,\left\Vert \xi
-\eta\right\Vert _{p},\left\Vert \eta\right\Vert _{p}\right\}  ^{s}%
\]
for $s>0$. Therefore%
\begin{equation}
\left[  \xi\right]  _{p}^{s}\leq\left[  \xi-\eta\right]  _{p}^{s}+\left[
\eta\right]  _{p}^{s}. \label{Eq_10}%
\end{equation}
Now, for $f,g\in L^{2}$, by using (\ref{Eq_10}),%
\begin{align*}
\lbrack\xi]_{p}^{\frac{s}{2}}\left\vert \widehat{fg}\left(  \xi\right)
\right\vert  &  =\left\vert [\xi]_{p}^{\frac{s}{2}}%
%TCIMACRO{\dint \limits_{\mathbb{Q}_{p}^{n}}}%
%BeginExpansion
{\displaystyle\int\limits_{\mathbb{Q}_{p}^{n}}}
%EndExpansion
\widehat{f}(\xi-\eta)\widehat{g}(\eta)d^{n}\eta\right\vert \\
&  \leq%
%TCIMACRO{\dint \limits_{\mathbb{Q}_{p}^{n}}}%
%BeginExpansion
{\displaystyle\int\limits_{\mathbb{Q}_{p}^{n}}}
%EndExpansion
[\xi-\eta]_{p}^{\frac{s}{2}}\left\vert \widehat{f}(\xi-\eta)\right\vert
\left\vert \widehat{g}(\eta)\right\vert d^{n}\eta+%
%TCIMACRO{\dint \limits_{\mathbb{Q}_{p}^{n}}}%
%BeginExpansion
{\displaystyle\int\limits_{\mathbb{Q}_{p}^{n}}}
%EndExpansion
[\eta]_{p}^{\frac{s}{2}}\left\vert \widehat{g}(\eta)\right\vert \left\vert
\widehat{f}(\xi-\eta)\right\vert d^{n}\eta\\
&  =[\xi]_{p}^{\frac{s}{2}}\left\vert \widehat{f}(\xi)\right\vert
\ast\left\vert \widehat{g}(\xi)\right\vert +\left\vert \widehat{f}%
(\xi)\right\vert \ast\lbrack\xi]_{p}^{\frac{s}{2}}\left\vert \widehat{g}%
(\xi)\right\vert .
\end{align*}
Then%
\begin{align*}
\left\Vert fg\right\Vert _{s}  &  \leq\left\Vert \lbrack\xi]_{p}^{\frac{s}{2}%
}\left\vert \widehat{f}(\xi)\right\vert \ast\left\vert \widehat{g}%
(\xi)\right\vert +\left\vert \widehat{f}(\xi)\right\vert \ast\lbrack\xi
]_{p}^{\frac{s}{2}}\left\vert \widehat{g}(\xi)\right\vert \right\Vert _{2}\\
&  \leq\left\Vert \lbrack\xi]_{p}^{\frac{s}{2}}\left\vert \widehat{f}%
(\xi)\right\vert \ast\left\vert \widehat{g}(\xi)\right\vert \right\Vert
_{2}+\left\Vert \left\vert \widehat{f}(\xi)\right\vert \ast\lbrack\xi
]_{p}^{\frac{s}{2}}\left\vert \widehat{g}(\xi)\right\vert \right\Vert _{2}.
\end{align*}
Since $[\xi]_{p}^{\frac{s}{2}}\left\vert \widehat{f}(\xi)\right\vert $,
$[\xi]_{p}^{\frac{s}{2}}\left\vert \widehat{g}(\xi)\right\vert \in L^{2}$, by
using the Cauchy-Schwarz inequality with $s>n/2$, \ we have $\left\Vert
\left\vert \widehat{g}(\xi)\right\vert \right\Vert _{1}\leq A(n,s)\left\Vert
g\right\Vert _{s}$, $\left\Vert \left\vert \widehat{f}(\xi)\right\vert
\right\Vert _{1}\leq A(n,s)\left\Vert f\right\Vert _{s}$, i.e. $\left\vert
\widehat{g}(\xi)\right\vert $, $\left\vert \widehat{f}(\xi)\right\vert \in
L^{1}$. Now, by the Young inequality, we obtain that%
\[
\left\Vert fg\right\Vert _{s}\leq\left\Vert f\right\Vert _{s}\left\Vert
\widehat{g}\right\Vert _{1}+\left\Vert g\right\Vert _{s}\left\Vert \widehat
{f}\right\Vert _{1}\leq2A(n,s)\left\Vert f\right\Vert _{s}\left\Vert
g\right\Vert _{s}.
\]

\end{proof}

\subsection{The Taibleson operator}

Let $\alpha>0$, the Taibleson operator is defined as
\[
(\boldsymbol{D}^{\alpha}\varphi)(x)=\mathcal{F}_{\xi\rightarrow x}%
^{-1}(\left\Vert \xi\right\Vert _{p}^{\alpha}(\mathcal{F}_{x\rightarrow\xi
}\varphi)),
\]
for $\varphi\in\mathcal{D}(\mathbb{Q}_{p}^{n})$. This operator admits the
extension
\[
(\boldsymbol{D}^{\alpha}f)(x)=\frac{1-p^{\alpha}}{1-p^{-\alpha-n}}%
\int\limits_{\mathbb{Q}_{p}^{n}}\left\Vert y\right\Vert _{p}^{-\alpha
-n}\{f(x-y)-f(x)\}d^{n}y
\]
to locally constant functions satisfying
\[
\int\limits_{\left\Vert x\right\Vert _{p}>1}\left\Vert x\right\Vert
_{p}^{-\alpha-n}\left\vert f\left(  x\right)  \right\vert d^{n}x<\infty.
\]
The Taibleson operator $\boldsymbol{D}^{\alpha}$ is the $p-$adic analog of the
fractional derivative. If $n=1$, $\boldsymbol{D}^{\alpha}$ agrees with the
Vladimirov operator. The operator $\boldsymbol{D}^{\alpha}$ does not satisfy
the chain rule neither Leibniz formula. We also use the notation
$\boldsymbol{D}_{x}^{\alpha}$, when the Taibleson operator acts on functions
depending on the variables $x\in\mathbb{Q}_{p}^{n}$ and $t\geq0$.

Given $0=\delta_{0}<\delta_{1}<\cdots<\delta_{k-1}<\delta_{k}=\delta$, we
define%
\[
P(\boldsymbol{D})=%
%TCIMACRO{\dsum \limits_{j=0}^{k}}%
%BeginExpansion
{\displaystyle\sum\limits_{j=0}^{k}}
%EndExpansion
C_{j}\boldsymbol{D}^{\delta_{j}}\text{, where the }C_{j}\in\mathbb{R}\text{.}%
\]

\begin{lemma}
[{\cite[Lemma 10.13 and Theorem 10.15]{KKZuniga}}]\label{Lemma2} For
$s\in\mathbb{R}_{+}$, the mapping $P(\boldsymbol{D}):\mathcal{H}_{s+2\delta
}\longrightarrow\mathcal{H}_{s}$ is a well-defined continuous mapping between
Banach spaces.
\end{lemma}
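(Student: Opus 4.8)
The plan is to reduce everything to the Fourier side. By definition each $\boldsymbol{D}^{\delta_j}$ is the Fourier multiplier with symbol $\|\xi\|_p^{\delta_j}$, so $P(\boldsymbol{D})$ is the Fourier multiplier with symbol $\sigma(\xi):=\sum_{j=0}^{k}C_j\|\xi\|_p^{\delta_j}$, and the norms $\|\cdot\|_s$ are weighted $L^2$ norms on the Fourier side with weight $[\xi]_p^{s}$. Hence the whole statement will follow from one pointwise bound on $\sigma$.

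The crux of the argument --- elementary, but the one place where care is needed --- is the symbol estimate
\[
|\sigma(\xi)|\le\Bigl(\sum_{j=0}^{k}|C_j|\Bigr)[\xi]_p^{\delta}\qquad\text{for every }\xi\in\mathbb{Q}_p^n .
\]
One cannot simply write $\|\xi\|_p^{\delta_j}\le\|\xi\|_p^{\delta}$, since $\delta_j\le\delta$ while $\|\xi\|_p$ may be smaller than $1$; the correct move is $\|\xi\|_p^{\delta_j}\le[\xi]_p^{\delta_j}\le[\xi]_p^{\delta}$, valid precisely because $[\xi]_p=\max\{1,\|\xi\|_p\}\ge1$ and $0\le\delta_j\le\delta$. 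The triangle inequality then yields the displayed bound.

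Next I would check well-definedness: for $f\in\mathcal{H}_{s+2\delta}$ with $s\ge0$, each $\|\xi\|_p^{\delta_j}\widehat f$ lies in $L^2$, because $\|\xi\|_p^{2\delta_j}\le[\xi]_p^{s+2\delta}$ and $\int_{\mathbb{Q}_p^n}[\xi]_p^{s+2\delta}|\widehat f(\xi)|^2\,d^n\xi=\|f\|_{s+2\delta}^2<\infty$; thus each $\boldsymbol{D}^{\delta_j}f=\mathcal{F}^{-1}(\|\xi\|_p^{\delta_j}\widehat f)$ is a well-defined element of $L^2$ and $\widehat{P(\boldsymbol{D})f}=\sigma\,\widehat f$. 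Finally, combining this with the symbol estimate,
\begin{align*}
\|P(\boldsymbol{D})f\|_s^2 &=\int_{\mathbb{Q}_p^n}[\xi]_p^{s}\,|\sigma(\xi)|^2\,|\widehat f(\xi)|^2\,d^n\xi\\
&\le\Bigl(\sum_{j=0}^{k}|C_j|\Bigr)^2\int_{\mathbb{Q}_p^n}[\xi]_p^{s+2\delta}\,|\widehat f(\xi)|^2\,d^n\xi=\Bigl(\sum_{j=0}^{k}|C_j|\Bigr)^2\|f\|_{s+2\delta}^2 .
\end{align*}
This shows at once that $P(\boldsymbol{D})f\in\mathcal{H}_s$ and that $P(\boldsymbol{D})\colon\mathcal{H}_{s+2\delta}\to\mathcal{H}_s$ is linear and bounded (with operator norm at most $\sum_{j=0}^{k}|C_j|$), hence continuous; since $\mathcal{H}_{s+2\delta}$ and $\mathcal{H}_s$ are Banach spaces, this is exactly the assertion. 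Note that the jump from $\delta$ in the symbol to $2\delta$ in the Sobolev scale is forced by the square in the definition of $\|\cdot\|_s$ and is sharp, since $|\sigma(\xi)|\sim|C_k|\,\|\xi\|_p^{\delta}$ as $\|\xi\|_p\to\infty$.
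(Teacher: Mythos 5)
Your argument is correct: the pointwise symbol bound $\|\xi\|_p^{\delta_j}\le[\xi]_p^{\delta_j}\le[\xi]_p^{\delta}$, combined with the weighted-$L^2$ description of $\|\cdot\|_s$, gives exactly $\|P(\boldsymbol{D})f\|_s\le\bigl(\sum_{j=0}^{k}|C_j|\bigr)\|f\|_{s+2\delta}$, which is the assertion. The paper itself does not prove this lemma but cites \cite[Lemma 10.13 and Theorem 10.15]{KKZuniga}; the estimate you use is precisely the one the authors invoke term by term in their proof of Lemma \ref{Lemma3}, so your route coincides with the standard argument underlying the cited result.
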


\begin{lemma}
\label{Lemma3}Take $s-2\delta>n/2$ and $f,g\in\mathcal{H}_{s+2\delta}$. Then
\[
\left\Vert P(\boldsymbol{D})\left(  fg\right)  \right\Vert _{s}\leq
C(n,s,\delta)\left\Vert f\right\Vert _{s+2\delta}\left\Vert g\right\Vert
_{s+2\delta},
\]
where $C(n,s,\delta)$ is a positive constant that depends of $n$, $s$ and
$\delta$.
\end{lemma}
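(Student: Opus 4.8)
The plan is to combine the composition estimate of Lemma~\ref{Lemma2} with the Banach algebra property of Proposition~\ref{Prop1}, being careful about which Sobolev index the product $fg$ actually lies in. Concretely, write $s = (s-2\delta) + 2\delta$ and set $s' := s - 2\delta$. By hypothesis $s' > n/2$, so Proposition~\ref{Prop1} applies at level $s'$: if $f,g \in \mathcal{H}_{s+2\delta} \hookrightarrow \mathcal{H}_{s'}$ (the embedding because $s' < s + 2\delta$, since $\delta>0$ and $s>0$), then $fg \in \mathcal{H}_{s'}$ with $\|fg\|_{s'} \le C(n,s')\,\|f\|_{s'}\|g\|_{s'}$. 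That is not yet enough, because Lemma~\ref{Lemma2} needs $fg$ to sit two levels of $\delta$ above the target, i.e.\ in $\mathcal{H}_{s+2\delta}$, not merely in $\mathcal{H}_{s'} = \mathcal{H}_{s-2\delta}$.

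So the first real step is to upgrade the algebra estimate of Proposition~\ref{Prop1} from level $s'$ to level $s+2\delta$. I would revisit the proof of Proposition~\ref{Prop1}: the only place the hypothesis $s>n/2$ was used there was to guarantee $\widehat f, \widehat g \in L^1$ via Cauchy--Schwarz against $[\xi]_p^{-s/2}$, which requires $\int_{\mathbb{Q}_p^n}[\xi]_p^{-s}\,d^n\xi < \infty$, i.e.\ $s>n$ — wait, more precisely the argument there splits the norm and uses Young's inequality $\|h_1 * h_2\|_2 \le \|h_1\|_2\|h_2\|_1$, needing $\widehat f,\widehat g\in L^1$. The key observation is: for \emph{any} index $\sigma \ge s' > n/2$, if $f,g\in\mathcal{H}_\sigma$ then $\widehat f,\widehat g\in L^1$ (the $L^1$ bound only needs $\|\,\cdot\,\|_{s'}<\infty$ with $s'>n/2$, and $\mathcal{H}_\sigma\hookrightarrow\mathcal{H}_{s'}$), and then the same convolution/Young argument gives $\|fg\|_\sigma \le C(n,\sigma)\big(\|f\|_\sigma\|\widehat g\|_1 + \|g\|_\sigma\|\widehat f\|_1\big) \le 2A(n,s')\,C(n,\sigma)\,\|f\|_\sigma\|g\|_\sigma$. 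Applying this with $\sigma = s+2\delta$ yields $fg \in \mathcal{H}_{s+2\delta}$ and $\|fg\|_{s+2\delta} \le C'(n,s,\delta)\,\|f\|_{s+2\delta}\|g\|_{s+2\delta}$.

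The second step is then immediate: since $fg\in\mathcal{H}_{s+2\delta} = \mathcal{H}_{s'+2\delta}$ with $s'>n/2$ — and more to the point $fg\in\mathcal{H}_{s+2\delta}$ where the target index is $s$, so we are exactly in the hypotheses of Lemma~\ref{Lemma2} (take the ``$s$'' of Lemma~\ref{Lemma2} to be our $s$) — we get $P(\boldsymbol{D})(fg)\in\mathcal{H}_s$ with $\|P(\boldsymbol{D})(fg)\|_s \le C_P(n,s,\delta)\,\|fg\|_{s+2\delta}$. Chaining the two inequalities,
\[
\|P(\boldsymbol{D})(fg)\|_s \le C_P(n,s,\delta)\,\|fg\|_{s+2\delta} \le C_P(n,s,\delta)\,C'(n,s,\delta)\,\|f\|_{s+2\delta}\|g\|_{s+2\delta},
\]
which is the claim with $C(n,s,\delta) = C_P(n,s,\delta)\,C'(n,s,\delta)$.

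The main obstacle, and the only non-bookkeeping point, is the first step: one must be sure that the Banach algebra estimate of Proposition~\ref{Prop1} is valid not just at the threshold $s'>n/2$ but at the higher index $s+2\delta$ with a constant that is again finite. This is why the hypothesis is stated as $s - 2\delta > n/2$ rather than $s > n/2$: it is precisely what makes $\mathcal{H}_{s+2\delta}$ embed into a Banach algebra $\mathcal{H}_{s-2\delta}$ while still controlling the $\mathcal{H}_{s+2\delta}$-norm of the product. If one prefers to avoid re-proving a strengthened Proposition~\ref{Prop1}, an alternative is to cite directly that $\mathcal{H}_\sigma$ is a Banach algebra for every $\sigma>n/2$ (which Proposition~\ref{Prop1} literally states, with $\sigma$ in place of $s$) and apply it with $\sigma=s+2\delta>n/2$; then the $L^1$-bound on the Fourier transforms is automatic and no reworking of the proof is needed. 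I would take this cleaner route in the write-up.
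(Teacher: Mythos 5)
Your proposal is correct and follows essentially the same route as the paper: apply the Banach algebra property of Proposition \ref{Prop1} at the index $s+2\delta>n/2$ to get $\Vert fg\Vert_{s+2\delta}\leq C\Vert f\Vert_{s+2\delta}\Vert g\Vert_{s+2\delta}$, and then the boundedness of $P(\boldsymbol{D}):\mathcal{H}_{s+2\delta}\rightarrow\mathcal{H}_{s}$ (the paper simply writes this step out term by term via $\Vert\xi\Vert_{p}^{2\delta_{j}}\leq[\xi]_{p}^{2\delta_{j}}$ instead of quoting Lemma \ref{Lemma2}'s operator norm). Your worry about having to re-prove Proposition \ref{Prop1} at the higher index is unnecessary, as you yourself note at the end — the paper just invokes it there — and the lemma in fact only uses $s>n/2$, the stronger hypothesis $s-2\delta>n/2$ being needed for later applications.
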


\begin{proof}
Since $s>n/2$ and $f,g\in\mathcal{H}_{s+2\delta}$, by Proposition \ref{Prop1},
$fg\in\mathcal{H}_{s+2\delta}$, and by Lemma \ref{Lemma2}, $P(\boldsymbol{D}%
)\left(  fg\right)  \in\mathcal{H}_{s}$. Now by using Proposition
\ref{Prop1},
\begin{gather*}
\left\Vert P(\boldsymbol{D})\left(  fg\right)  \right\Vert _{s}\leq%
%TCIMACRO{\dsum \limits_{j=0}^{k}}%
%BeginExpansion
{\displaystyle\sum\limits_{j=0}^{k}}
%EndExpansion
\left\vert C_{j}\right\vert \left\Vert \boldsymbol{D}^{\delta_{j}}\left(
fg\right)  \right\Vert _{s}\\
=%
%TCIMACRO{\dsum \limits_{j=0}^{k}}%
%BeginExpansion
{\displaystyle\sum\limits_{j=0}^{k}}
%EndExpansion
\left\vert C_{j}\right\vert \left(
%TCIMACRO{\dint \limits_{\mathbb{Q}_{p}^{n}}}%
%BeginExpansion
{\displaystyle\int\limits_{\mathbb{Q}_{p}^{n}}}
%EndExpansion
\left[  \xi\right]  _{p}^{s}\left\Vert \xi\right\Vert _{p}^{2\delta_{j}%
}\left\vert \widehat{fg}\left(  \xi\right)  \right\vert ^{2}d^{n}\xi\right)
^{\frac{1}{2}}\leq%
%TCIMACRO{\dsum \limits_{j=0}^{k}}%
%BeginExpansion
{\displaystyle\sum\limits_{j=0}^{k}}
%EndExpansion
\left\vert C_{j}\right\vert \left(
%TCIMACRO{\dint \limits_{\mathbb{Q}_{p}^{n}}}%
%BeginExpansion
{\displaystyle\int\limits_{\mathbb{Q}_{p}^{n}}}
%EndExpansion
\left[  \xi\right]  _{p}^{s+2\delta_{j}}\left\vert \widehat{fg}\left(
\xi\right)  \right\vert ^{2}d^{n}\xi\right)  ^{\frac{1}{2}}\\
=%
%TCIMACRO{\dsum \limits_{j=0}^{k}}%
%BeginExpansion
{\displaystyle\sum\limits_{j=0}^{k}}
%EndExpansion
\left\vert C_{j}\right\vert \left\Vert fg\right\Vert _{s+2\delta_{j}}\leq%
%TCIMACRO{\dsum \limits_{j=0}^{k}}%
%BeginExpansion
{\displaystyle\sum\limits_{j=0}^{k}}
%EndExpansion
\left\vert C_{j}\right\vert C(n,s,\delta_{j})\left\Vert f\right\Vert
_{s+2\delta_{j}}\left\Vert g\right\Vert _{s+2\delta_{j}}\\
\leq\left(
%TCIMACRO{\dsum \limits_{j=0}^{k}}%
%BeginExpansion
{\displaystyle\sum\limits_{j=0}^{k}}
%EndExpansion
\left\vert C_{j}\right\vert C(n,s,\delta_{j})\right)  \left\Vert f\right\Vert
_{s+2\delta}\left\Vert g\right\Vert _{s+2\delta}.
\end{gather*}

\end{proof}

\section{\label{Section_4}Local well-posedness of the $p-$adic Nagumo-type
equations}

\subsection{Some technical remarks}

Let $X$, $Y$ Banach spaces, $T_{0}\in(0,\infty)$ and let $F:\left[
0,T_{0}\right]  \times Y\longrightarrow X$ a continuous function. The Cauchy
problem%
\begin{equation}
\left\{
\begin{array}
[c]{l}%
\partial_{t}u(t)=F\left(  t,u\left(  t\right)  \right) \\
\\
u(0)=\phi\in Y
\end{array}
\right.  \label{Eq-11}%
\end{equation}
is locally well-posed in $Y$, if the following conditions are satisfied.

(i) There is $T\in\left(  0,T_{0}\right]  $ and a function $u\in C([0,T];Y)$,
with $u(0)=\phi$, satisfying the differential equation in the following
sense:
\[
\lim_{h\rightarrow0}\left\Vert \frac{u(t+h)-u(t)}{h}-F(t,u(t))\right\Vert
_{X}=0,
\]
where the derivatives at $t=0$ and $t=T$ are calculated from the right and
left, respectively.

(ii) The initial value problem (\ref{Eq-11}) has at most one solution in
$C([0,T];Y)$.

(iii) The function $\phi\rightarrow u$ is continuous. That is, let $\left\{
\phi_{n}\right\}  $ be a sequence in $Y$ such that $\phi_{n}\rightarrow
\phi_{\infty}$ in $Y$ and let $u_{n}\in C\left(  \left[  0,T_{n}\right]
;Y\right)  $, resp. $u_{\infty}\in C\left(  \left[  0,T_{\infty}\right]
;Y\right)  $, be the corresponding solutions. Let $T\in\left(  0,T_{\infty
}\right)  $, then the solutions $u_{n}$ are defined in $[0,T]$ for all $n$ big
enough and
\[
\lim_{n\rightarrow\infty}\sup_{t\in\lbrack0,T]}\left\Vert u_{n}(t)-u_{\infty
}(t)\right\Vert _{Y}=0.
\]

\subsection{Main result}

Consider the following Cauchy problem:%
\begin{equation}
\left\{
\begin{array}
[c]{ll}%
u\in C\left(  \left[  0,T\right]  ,\mathcal{H}_{s}\right)  \cap C^{1}\left(
\left[  0,T\right]  ,\mathcal{H}_{s}\right)  ; & \\
& \\
u_{t}=-\gamma\boldsymbol{D}_{x}^{\alpha}u-u^{3}+\left(  \beta+1\right)
u^{2}-\beta u+P(\boldsymbol{D}_{x})\left(  u^{m}\right)  , & x\in
\mathbb{Q}_{p}^{n},\ t\in\left[  0,T\right]  ;\\
& \\
u(0)=f_{0}\in\mathcal{H}_{s}, &
\end{array}
\right.  \label{Cauchy-Problem}%
\end{equation}
where $T$, $\gamma$, $\alpha$, $\beta>0$, and $m$ is a positive integer. The
main result of this work is the following:

\begin{theorem}
\label{Thorem1}For $s>n/2+2\delta$, the Cauchy problem (\ref{Cauchy-Problem})
is locally well-posed in $\mathcal{H}_{s}$.
\end{theorem}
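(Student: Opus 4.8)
The plan is to recast \eqref{Cauchy-Problem} as an abstract semilinear Cauchy problem $u_t = -A u + N(u)$ on the Banach space $X=\mathcal{H}_s$, where $A=\gamma\boldsymbol{D}_x^{\alpha}$ generates a strongly continuous (in fact analytic) contraction semigroup $\{e^{-tA}\}_{t\ge 0}$ on $\mathcal{H}_s$, and $N(u) = -u^3+(\beta+1)u^2-\beta u + P(\boldsymbol{D}_x)(u^m)$ is the nonlinearity. The Fourier multiplier description of $\boldsymbol{D}_x^{\alpha}$ makes the generation statement immediate: on the Fourier side $e^{-tA}$ is multiplication by $e^{-t\gamma\|\xi\|_p^{\alpha}}\le 1$, so $\|e^{-tA}f\|_s\le\|f\|_s$ for all $s$, strong continuity follows from dominated convergence, and $A:\mathcal{H}_{s+\alpha}\to\mathcal{H}_s$ is the generator with the obvious domain. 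I would then pass to the mild formulation
\[
u(t)=e^{-tA}f_0+\int_0^t e^{-(t-\tau)A}N(u(\tau))\,d\tau
\]
and solve it by a contraction-mapping (Picard) argument in $C([0,T],\mathcal{H}_s)$ for $T$ small.

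The key analytic input is that $N$ maps $\mathcal{H}_s$ into $\mathcal{H}_s$ and is locally Lipschitz there. Here the hypothesis $s>n/2+2\delta$ is exactly what is needed: since $s>n/2$, Proposition~\ref{Prop1} makes $\mathcal{H}_s$ a Banach algebra, so the polynomial terms $u^3$, $u^2$, $u$ are handled by repeated use of $\|fg\|_s\le C(n,s)\|f\|_s\|g\|_s$, giving for instance $\|u^3-v^3\|_s\le C(\|u\|_s^2+\|u\|_s\|v\|_s+\|v\|_s^2)\|u-v\|_s$ and similarly for the quadratic and linear terms. For the nonlocal term $P(\boldsymbol{D}_x)(u^m)$ one needs a version of Lemma~\ref{Lemma3}: since $s-2\delta>n/2$, first factor $u^m$ inside the algebra $\mathcal{H}_{s}$ (note $\mathcal{H}_s\subset\mathcal{H}_{s-2\delta}$, and in fact one works at level $\mathcal{H}_s$ and then loses $2\delta$), and then apply $P(\boldsymbol{D})\colon\mathcal{H}_{s}\to\mathcal{H}_{s-2\delta}$; to land back in $\mathcal{H}_s$ one instead runs the whole fixed-point argument noting that $P(\boldsymbol{D}_x)(u^m)\in\mathcal{H}_{s}$ provided $u\in\mathcal{H}_{s+2\delta}$ — so the cleaner route is to take $X=\mathcal{H}_{s}$ but do the estimates via the chain $\mathcal{H}_{s}\xrightarrow{u\mapsto u^m}\mathcal{H}_s$ (algebra) and observe that we need the \emph{smoothing} of $e^{-tA}$ to absorb the $2\delta$ loss. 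Concretely, $\|e^{-tA}g\|_{s}\le C t^{-2\delta/\alpha}\|g\|_{s-2\delta}$ from the elementary bound $\sup_{r\ge 0} r^{2\delta/\alpha}e^{-t\gamma r}\le C t^{-2\delta/\alpha}$ applied to $r=\|\xi\|_p^{\alpha}$; this turns the Duhamel integral into one with an integrable $t^{-2\delta/\alpha}$ singularity (integrable since one can always enlarge $s$ harmlessly, or because the exponent is $<1$ after choosing parameters, which is where a mild additional remark may be needed). The polynomial terms need no smoothing. This yields a contraction on a ball in $C([0,T],\mathcal{H}_s)$ for $T$ small, proving existence (i).

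Uniqueness (ii) follows from the same Lipschitz estimate via Gronwall applied to $\|u(t)-v(t)\|_s$ using the mild formulation, and continuous dependence on the data (iii) is obtained in the standard way: for $\phi_n\to\phi_\infty$ in $\mathcal{H}_s$, the uniform-in-$n$ a priori bound (coming from the size of the data and the Lipschitz constant, which are locally uniform) shows the solutions $u_n$ exist on a common interval $[0,T]$, and subtracting the two Duhamel formulas and applying Gronwall gives $\sup_{[0,T]}\|u_n(t)-u_\infty(t)\|_s\le C\|\phi_n-\phi_\infty\|_s\to 0$. Finally, to upgrade the mild solution to a genuine $C^1([0,T],\mathcal{H}_s)$ solution of the PDE in the required sense, I would invoke the standard regularity theory for analytic semigroups (the semigroup generated by $\boldsymbol{D}_x^{\alpha}$ is analytic because its Fourier symbol $\|\xi\|_p^\alpha$ is nonnegative and the resolvent set contains a sector), together with Hölder continuity of $\tau\mapsto N(u(\tau))$ which follows from $u\in C([0,T],\mathcal{H}_s)$ and the local Lipschitz property of $N$; see \cite{C-H}, \cite{Milan}.

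I expect the main obstacle to be the bookkeeping of Sobolev indices around the nonlocal term $P(\boldsymbol{D}_x)(u^m)$: one must choose the functional setting so that the $2\delta$ derivative loss incurred by $P(\boldsymbol{D})$ is compensated either by the parabolic smoothing of $e^{-t\gamma\boldsymbol{D}_x^\alpha}$ (requiring $2\delta/\alpha<1$, or an iteration if not) or by working in a slightly higher-regularity space, and to verify that the resulting singular Duhamel kernel is still integrable so that the contraction estimate closes. Everything else — generation of the semigroup, the algebra estimates, Gronwall — is routine given Proposition~\ref{Prop1} and Lemmas~\ref{Lemma2}–\ref{Lemma3}.
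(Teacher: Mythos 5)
Your proposal follows essentially the same route as the paper's proof: the Duhamel/mild formulation with the semigroup generated by $-\gamma\boldsymbol{D}^{\alpha}-\beta\boldsymbol{I}$ (you merely shift the $-\beta u$ term into the nonlinearity), the algebra property of $\mathcal{H}_{s}$ together with Lemma \ref{Lemma3} to obtain the two-norm Lipschitz estimate $\Vert F(u)-F(w)\Vert_{s-2\delta}\leq L(\Vert u\Vert_{s},\Vert w\Vert_{s})\Vert u-w\Vert_{s}$ of Proposition \ref{Prop2}, the smoothing estimate for the semigroup (the paper's Lemma \ref{LemmaA}) to absorb the $2\delta$ loss with an integrable time singularity, a Banach fixed-point argument in $C([0,T];\mathcal{H}_{s})$, Gronwall for uniqueness and continuous dependence (Propositions \ref{Prop5}, \ref{Prop6}), and the standard mild-versus-classical equivalence from \cite{C-H}. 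The only slips are minor: since the weight enters the squared Sobolev norm, the smoothing gain of $2\delta$ derivatives costs $t^{-\delta/\alpha}$ (Lemma \ref{LemmaA} with $\lambda=2\delta$, exponent $\lambda/(2\alpha)$), not $t^{-2\delta/\alpha}$, and analyticity of the semigroup is not needed for the regularity upgrade, which the paper gets from the inhomogeneous linear theory cited in Remark \ref{Nota1}.
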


\subsection{Preliminary results}

We denote by $\boldsymbol{V}(t)=e^{-(\gamma\boldsymbol{D}^{\alpha}%
+\beta\boldsymbol{I})t}$, $t\geq0$, the semigroup in $L^{2}$ generated by the
operator $\boldsymbol{A}=-\gamma\boldsymbol{D}^{\alpha}-\beta\boldsymbol{I}$,
that is,
\[
\boldsymbol{V}(t)f\left(  x\right)  =\mathcal{F}_{\xi\rightarrow x}%
^{-1}\left(  e^{-(\gamma\lVert\xi\rVert_{p}^{\alpha}+\beta)t}\mathcal{F}%
_{x\rightarrow\xi}f\right)  \text{, \textup{for}}\ f\in L^{2},\ t\geq0.
\]

\begin{lemma}
\label{Lemma0}$\{\boldsymbol{V}(t)\}_{t\geq0}$ is a $C^{0}-$semigroup of
contractions in $\mathcal{H}_{s}$, $s\in\mathbb{R}$, satisfying $\left\Vert
\boldsymbol{V}(t)\right\Vert _{s}\leq e^{-\beta t}$ for $t\geq0$. Moreover,
$u(x,t)=\boldsymbol{V}(t)f_{0}(x)$ is the unique solution to the following
Cauchy problem:
\begin{equation}
\left\{
\begin{array}
[c]{l}%
u\in C\left(  \left[  0,T\right]  ,\mathcal{H}_{s}\right)  \cap C^{1}\left(
\left[  0,T\right]  ,\mathcal{H}_{s}\right)  ;\\
\\
u_{t}=-\gamma\boldsymbol{D}^{\alpha}u-\beta u,\text{ }t\in\left[  0,T\right]
;\\
\\
u(x,0)=f_{0}(x)\in\mathcal{H}_{s},
\end{array}
\right.  \label{Cauchy-Problem_0}%
\end{equation}
where $T$ is an arbitrary positive number.
\end{lemma}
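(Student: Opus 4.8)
The plan is to transfer everything to the Fourier side, where $\left\Vert\varphi\right\Vert_{s}^{2}=\int_{\mathbb{Q}_{p}^{n}}\left[\xi\right]_{p}^{s}\vert\widehat{\varphi}(\xi)\vert^{2}d^{n}\xi$ and $\boldsymbol{V}(t)$ is just multiplication of the Fourier transform by $m_{t}(\xi):=e^{-(\gamma\left\Vert\xi\right\Vert_{p}^{\alpha}+\beta)t}$. First I would record the purely algebraic facts $m_{0}\equiv 1$ and $m_{t_{1}}m_{t_{2}}=m_{t_{1}+t_{2}}$, which give $\boldsymbol{V}(0)=\boldsymbol{I}$ and $\boldsymbol{V}(t_{1})\boldsymbol{V}(t_{2})=\boldsymbol{V}(t_{1}+t_{2})$ on every $\mathcal{H}_{s}$. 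Since $\gamma>0$ and $\left\Vert\xi\right\Vert_{p}^{\alpha}\geq 0$ we have $0\leq m_{t}(\xi)\leq e^{-\beta t}$ for all $\xi$ and $t\geq 0$, hence
\[
\left\Vert\boldsymbol{V}(t)f\right\Vert_{s}^{2}=\int_{\mathbb{Q}_{p}^{n}}\left[\xi\right]_{p}^{s}\,m_{t}(\xi)^{2}\,\vert\widehat{f}(\xi)\vert^{2}\,d^{n}\xi\leq e^{-2\beta t}\left\Vert f\right\Vert_{s}^{2},
\]
i.e. $\left\Vert\boldsymbol{V}(t)\right\Vert_{s}\leq e^{-\beta t}\leq 1$, which is the asserted contraction estimate.

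For strong continuity, fix $f\in\mathcal{H}_{s}$. As $t\to 0^{+}$,
\[
\left\Vert\boldsymbol{V}(t)f-f\right\Vert_{s}^{2}=\int_{\mathbb{Q}_{p}^{n}}\left[\xi\right]_{p}^{s}\,\vert m_{t}(\xi)-1\vert^{2}\,\vert\widehat{f}(\xi)\vert^{2}\,d^{n}\xi\longrightarrow 0
\]
by dominated convergence: the integrand tends to $0$ pointwise and, because $0\leq m_{t}(\xi)\leq 1$, it is dominated by $\left[\xi\right]_{p}^{s}\vert\widehat{f}(\xi)\vert^{2}\in L^{1}$. Combined with the semigroup law and the contraction bound this gives $\left\Vert\boldsymbol{V}(t)f-\boldsymbol{V}(t_{0})f\right\Vert_{s}\leq\left\Vert\boldsymbol{V}(\vert t-t_{0}\vert)f-f\right\Vert_{s}\to 0$, so $t\mapsto\boldsymbol{V}(t)f$ is continuous on $[0,\infty)$; hence $\{\boldsymbol{V}(t)\}_{t\geq 0}$ is a $C^{0}$-semigroup of contractions on each $\mathcal{H}_{s}$.

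Now set $u(t)=\boldsymbol{V}(t)f_{0}$, so $\widehat{u}(\xi,t)=m_{t}(\xi)\widehat{f_{0}}(\xi)$; then $u(0)=f_{0}$ and $u\in C([0,T],\mathcal{H}_{s})$ by the previous step. To obtain the equation I would differentiate under the integral sign defining $\left\Vert\cdot\right\Vert_{s}$: the difference quotient $h^{-1}(m_{t+h}(\xi)-m_{t}(\xi))$ converges pointwise to $-(\gamma\left\Vert\xi\right\Vert_{p}^{\alpha}+\beta)m_{t}(\xi)$, and for $t$ in a compact subinterval of $(0,T]$ it is dominated — using $\vert e^{-a}-e^{-b}\vert\leq\vert a-b\vert$ together with the fact that $\left\Vert\xi\right\Vert_{p}^{\alpha}m_{t}(\xi)$ decays faster than every power of $\left[\xi\right]_{p}$ — by an $\left[\xi\right]_{p}^{s}$-integrable function; therefore $u_{t}(t)$ exists in $\mathcal{H}_{s}$, equals $-\gamma\boldsymbol{D}^{\alpha}u-\beta u$, and $t\mapsto u_{t}(t)$ is continuous in $\mathcal{H}_{s}$ by the same argument. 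Uniqueness is an energy estimate: if $u$ and $v$ both solve the problem, then $w=u-v\in C([0,T],\mathcal{H}_{s})\cap C^{1}([0,T],\mathcal{H}_{s})$ satisfies $w(0)=0$ and $w_{t}=-\gamma\boldsymbol{D}^{\alpha}w-\beta w$, so
\[
\frac{d}{dt}\left\Vert w(t)\right\Vert_{s}^{2}=2\operatorname{Re}\langle w_{t},w\rangle_{s}=-2\int_{\mathbb{Q}_{p}^{n}}\left[\xi\right]_{p}^{s}\bigl(\gamma\left\Vert\xi\right\Vert_{p}^{\alpha}+\beta\bigr)\vert\widehat{w}(\xi,t)\vert^{2}\,d^{n}\xi\leq-2\beta\left\Vert w(t)\right\Vert_{s}^{2},
\]
and Gronwall's inequality forces $\left\Vert w(t)\right\Vert_{s}^{2}\leq e^{-2\beta t}\left\Vert w(0)\right\Vert_{s}^{2}=0$.

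I expect the genuine obstacle to be the $C^{1}$-regularity up to $t=0$, not any of the estimates. For $t>0$ the factor $e^{-\gamma\left\Vert\xi\right\Vert_{p}^{\alpha}t}$ beats every power of $\left[\xi\right]_{p}$, so $u(t)\in\bigcap_{r}\mathcal{H}_{r}$ and all the differentiations above are unproblematic; at $t=0$, however, $h^{-1}(\boldsymbol{V}(h)f_{0}-f_{0})$ converges in $\mathcal{H}_{s}$ precisely when $\gamma\boldsymbol{D}^{\alpha}f_{0}+\beta f_{0}\in\mathcal{H}_{s}$, i.e. essentially when $f_{0}$ lies in the domain of the generator, roughly $\mathcal{H}_{s+2\alpha}$. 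The clean way around this is to observe that $\boldsymbol{A}=-\gamma\boldsymbol{D}^{\alpha}-\beta\boldsymbol{I}$ is $m$-dissipative on $\mathcal{H}_{s}$ — dissipativity is $\operatorname{Re}\langle\boldsymbol{A}f,f\rangle_{s}\leq-\beta\left\Vert f\right\Vert_{s}^{2}\leq 0$, and $(\lambda\boldsymbol{I}-\boldsymbol{A})^{-1}$ is again a bounded Fourier multiplier for $\lambda>0$ — and then to invoke the Hille--Yosida/Lumer--Phillips theorem together with the smoothing of the (analytic) semigroup it generates, cf. \cite{C-H}, \cite{Milan}; the identification $u=\boldsymbol{V}(t)f_{0}$ and the energy-based uniqueness then carry over unchanged, and the right derivative at $t=0$ is read in $\mathcal{H}_{s}$ for data in the generator's domain (in $\mathcal{H}_{s-2\alpha}$ otherwise).
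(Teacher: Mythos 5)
Your proposal is correct, and on the semigroup part it coincides with the paper: the contraction bound $\Vert\boldsymbol{V}(t)\Vert_{s}\leq e^{-\beta t}$ is the same one-line Fourier-multiplier estimate, and your strong-continuity argument (pointwise convergence of $1-e^{-(\gamma\Vert\xi\Vert_{p}^{\alpha}+\beta)t}$ dominated by $[\xi]_{p}^{s}\vert\widehat{f}(\xi)\vert^{2}$) is exactly the computation in the paper's proof. Where you diverge is the ``Moreover'' part: the paper disposes of existence and uniqueness for (\ref{Cauchy-Problem_0}) in one sentence by citing the standard abstract result for the homogeneous Cauchy problem generated by a $C_{0}$-semigroup (\cite[Theorem 4.3.1]{Milan}), whereas you prove it by hand --- differentiation under the integral sign on the Fourier side, using the smoothing of $e^{-\gamma\Vert\xi\Vert_{p}^{\alpha}t}$ for $t>0$, and uniqueness via the energy identity $\frac{d}{dt}\Vert w(t)\Vert_{s}^{2}=2\operatorname{Re}\langle w_{t},w\rangle_{s}\leq-2\beta\Vert w(t)\Vert_{s}^{2}$ plus Gronwall. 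Your route is more self-contained and makes the mechanism transparent (it also yields the decay of $\Vert u(t)\Vert_{s}$ directly), at the cost of some extra dominated-convergence bookkeeping; the paper's route is shorter but hides the hypotheses of the cited theorem. In particular, the issue you flag at $t=0$ is genuine and is present in the paper's statement as well: differentiability in $\mathcal{H}_{s}$ at $t=0$, hence membership in $C^{1}([0,T],\mathcal{H}_{s})$, requires $f_{0}$ to lie in the domain of the generator (essentially $\mathcal{H}_{s+2\alpha}$), which is exactly the hypothesis of the standard theorem the paper invokes; for general $f_{0}\in\mathcal{H}_{s}$ one only gets $C^{1}$ on $(0,T]$ in $\mathcal{H}_{s}$, or $C^{1}$ up to $t=0$ in the weaker norm. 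So your more cautious formulation is, if anything, the more accurate one, and your resolution via $m$-dissipativity/analytic smoothing is a legitimate alternative to the paper's citation.
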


\begin{proof}
We just verify the strongly continuity of the semigroup. Since
\begin{multline*}
\left\Vert \mathcal{F}_{\xi\rightarrow x}^{-1}\left(  e^{-(\gamma\lVert
\xi\rVert_{p}^{\alpha}+\beta)t}\mathcal{F}_{x\rightarrow\xi}f\right)
-f\left(  x\right)  \right\Vert _{s}^{2}\\
=%
%TCIMACRO{\dint \limits_{\mathbb{Q}_{p}^{n}}}%
%BeginExpansion
{\displaystyle\int\limits_{\mathbb{Q}_{p}^{n}}}
%EndExpansion
\left[  \xi\right]  _{p}^{s}\left\vert \widehat{f}\left(  \xi\right)
\right\vert ^{2}\left\{  1-e^{-(\gamma\lVert\xi\rVert_{p}^{\alpha}+\beta
)t}\right\}  ^{2}d^{n}\xi\leq\left\Vert f\right\Vert _{s}^{2},
\end{multline*}
it follows from the dominated convergence theorem that
\[
\lim_{t\rightarrow0+}\left\Vert \boldsymbol{V}(t)f-f\right\Vert _{s}=0.
\]
The existence and uniqueness of a solution for the Cauchy problem
(\ref{Cauchy-Problem_0}) follows from a well-known result, see e.g.
\cite[Theorem 4.3.1]{Milan}.
\end{proof}

\begin{lemma}
\label{LemmaA}Let $f_{0}\in\mathcal{H}_{s}$, $s\in\mathbb{R}$, $\lambda\geq0$.
Then, there exists a positive constant $C(\lambda,\alpha)$ that depends of
$\lambda$ and $\alpha$ such that
\begin{equation}
\lVert\boldsymbol{V}(t)f_{0}\rVert_{s+\lambda}\leq e^{-\beta t}\left(
1+C(\lambda,\alpha)\left(  \frac{\lambda}{2\alpha\gamma t}\right)
^{\frac{\lambda}{2\alpha}}\right)  \lVert f_{0}\rVert_{s}\text{ \ for
}t>0\text{.} \label{Eq-12}%
\end{equation}

\end{lemma}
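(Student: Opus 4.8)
The plan is to work directly on the Fourier side, where $\boldsymbol{V}(t)$ acts as multiplication by $e^{-(\gamma\lVert\xi\rVert_p^\alpha+\beta)t}$. Writing out the norm,
\[
\lVert\boldsymbol{V}(t)f_0\rVert_{s+\lambda}^2=\int\limits_{\mathbb{Q}_p^n}[\xi]_p^{s+\lambda}e^{-2(\gamma\lVert\xi\rVert_p^\alpha+\beta)t}\lvert\widehat{f_0}(\xi)\rvert^2\,d^n\xi=e^{-2\beta t}\int\limits_{\mathbb{Q}_p^n}[\xi]_p^{\lambda}e^{-2\gamma\lVert\xi\rVert_p^\alpha t}\,[\xi]_p^{s}\lvert\widehat{f_0}(\xi)\rvert^2\,d^n\xi,
\]
so everything reduces to bounding the multiplier $g_{t}(\xi):=[\xi]_p^{\lambda}e^{-2\gamma\lVert\xi\rVert_p^\alpha t}$ pointwise by $e^{-2\beta t}\bigl(1+C(\lambda,\alpha)(\lambda/2\alpha\gamma t)^{\lambda/2\alpha}\bigr)^2[\xi]_p^{0}$-type quantity — more precisely, I want $\sup_{\xi}g_t(\xi)\le\bigl(1+C(\lambda,\alpha)(\lambda/(2\alpha\gamma t))^{\lambda/(2\alpha)}\bigr)^2$ for $t>0$. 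Then pulling this supremum out of the integral leaves exactly $e^{-2\beta t}\lVert f_0\rVert_s^2$, and taking square roots gives \eqref{Eq-12}.

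The key step is therefore the elementary real-variable estimate: for $u:=\lVert\xi\rVert_p\ge 0$ one has $[\xi]_p=\max\{1,u\}$, so on the region $u\le 1$ we simply have $g_t(\xi)=e^{-2\gamma u^\alpha t}\le 1$, while on $u\ge 1$ we have $g_t(\xi)=u^{\lambda}e^{-2\gamma u^{\alpha}t}$. First I would optimize $v\mapsto v^{\lambda}e^{-2\gamma v^{\alpha}t}$ over $v\ge 0$: the critical point is at $v_{*}^{\alpha}=\lambda/(2\alpha\gamma t)$, giving maximum value $\bigl(\lambda/(2\alpha\gamma t)\bigr)^{\lambda/\alpha}e^{-\lambda/\alpha}$. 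Hence on $u\ge 1$,
\[
[\xi]_p^{\lambda}e^{-2\gamma\lVert\xi\rVert_p^{\alpha}t}\le \Bigl(\tfrac{\lambda}{2\alpha\gamma t}\Bigr)^{\lambda/\alpha}e^{-\lambda/\alpha}=:\Bigl(C(\lambda,\alpha)\bigr)^2\Bigl(\tfrac{\lambda}{2\alpha\gamma t}\Bigr)^{\lambda/\alpha},
\]
with $C(\lambda,\alpha)=e^{-\lambda/(2\alpha)}$ (one may absorb this into a generic constant). Combining the two regions, $g_t(\xi)\le 1+\bigl(C(\lambda,\alpha)\bigr)^2\bigl(\lambda/(2\alpha\gamma t)\bigr)^{\lambda/\alpha}\le\bigl(1+C(\lambda,\alpha)(\lambda/(2\alpha\gamma t))^{\lambda/(2\alpha)}\bigr)^2$, using $1+a^2\le(1+a)^2$ for $a\ge 0$ with $a=C(\lambda,\alpha)(\lambda/(2\alpha\gamma t))^{\lambda/(2\alpha)}$.

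I do not anticipate a serious obstacle; the only mild subtlety is the degenerate case $\lambda=0$, where the bound reads $\lVert\boldsymbol{V}(t)f_0\rVert_s\le e^{-\beta t}\lVert f_0\rVert_s$ and follows directly from $e^{-2\gamma\lVert\xi\rVert_p^\alpha t}\le 1$ (this is consistent with Lemma \ref{Lemma0}), so the claimed constant term $(\lambda/(2\alpha\gamma t))^{\lambda/(2\alpha)}\to 1$ poses no problem. One should also note the estimate is stated for $t>0$ precisely because the bound blows up as $t\to 0^{+}$, reflecting the smoothing property of the semigroup; no claim is made at $t=0$. Assembling the pointwise multiplier bound, integrating against the finite measure $[\xi]_p^{s}\lvert\widehat{f_0}(\xi)\rvert^2\,d^n\xi$ (finite since $f_0\in\mathcal{H}_s$), and taking square roots completes the proof.
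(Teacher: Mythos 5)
Your proposal is correct and follows essentially the same route as the paper: pass to the Fourier side, pull out $e^{-2\beta t}$, split according to $\lVert\xi\rVert_{p}\leq 1$ versus $\lVert\xi\rVert_{p}>1$ (the paper writes this as $\mathbb{Z}_{p}^{n}$ versus its complement), and optimize $y\mapsto y^{\lambda}e^{-2\gamma y^{\alpha}t}$ at $y_{\ast}=\left(\lambda/(2\alpha\gamma t)\right)^{1/\alpha}$. Your explicit use of $1+a^{2}\leq(1+a)^{2}$ to pass from the squared bound to the stated inequality is a small step the paper leaves implicit, but it is the same argument.
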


\begin{proof}
We first notice that%
\begin{gather*}
\left\Vert \boldsymbol{V}\left(  t\right)  f_{0}\right\Vert _{s+\lambda}%
^{2}=\int\limits_{\mathbb{Q}_{p}^{n}}[\xi]_{p}^{s+\lambda}e^{-2(\gamma
\lVert\xi\rVert_{p}^{\alpha}+\beta)t}\left\vert f_{0}\left(  \xi\right)
\right\vert ^{2}d^{n}\xi\\
\leq e^{-2\beta t}\left(  \sup_{\xi\in\mathbb{Q}_{p}^{n}}[\xi]_{p}^{\lambda
}e^{-2\gamma\lVert\xi\rVert_{p}^{\alpha}t}\right)  \left\Vert f_{0}\right\Vert
_{s}^{2}\leq e^{-2\beta t}\left(  1+\sup_{\xi\in\mathbb{Q}_{p}^{n}%
\smallsetminus\mathbb{Z}_{p}^{n}}\left\Vert \xi\right\Vert _{p}^{\lambda
}e^{-2\gamma\lVert\xi\rVert_{p}^{\alpha}t}\right)  \left\Vert f_{0}\right\Vert
_{s}^{2}\\
\leq e^{-2\beta t}\left(  1+\sup_{\xi\in\mathbb{Q}_{p}^{n}}\left\Vert
\xi\right\Vert _{p}^{\lambda}e^{-2\gamma\lVert\xi\rVert_{p}^{\alpha}t}\right)
\left\Vert f_{0}\right\Vert _{s}^{2}.
\end{gather*}
We now set $y=\lVert\xi\rVert_{p}$ and $h(y)=y^{\lambda}e^{-2\gamma y^{\alpha
}t}$. By using the fact that $h(y)$ reaches its maximum at $y_{\textup{max}%
}=\left(  \frac{\lambda}{2\alpha\gamma t}\right)  ^{\frac{1}{\alpha}}$, we
conclude that%
\[
\sup_{\xi\in\mathbb{Q}_{p}^{n}}\left\Vert \xi\right\Vert _{p}^{\lambda
}e^{-2\gamma\lVert\xi\rVert_{p}^{\alpha}t}\leq\left(  \frac{\lambda}%
{2\alpha\gamma t}\right)  ^{\frac{\lambda}{\alpha}}e^{-\frac{\lambda}{\alpha}%
}\leq\textup{C}\left(  \lambda,\alpha\right)  \left(  \frac{\lambda}%
{2\alpha\gamma t}\right)  ^{\frac{\lambda}{\alpha}}.
\]

\end{proof}

\begin{proposition}
\label{Prop2}Let $s>n/2+2\delta$ and $F(u)=(\beta+1)u^{2}-u^{3}+P\left(
\boldsymbol{D}\right)  (u^{m})$. Then $F:\mathcal{H}_{s}\longrightarrow
\mathcal{H}_{s-2\delta}$ is a continuous function satisfying
\begin{equation}
\lVert F(u)-F(w)\lVert_{s-2\delta}\leq L(\lVert u\lVert_{s},\lVert w\lVert
_{s})\lVert u-w\lVert_{s}, \label{Eq-13}%
\end{equation}
for $u,w\in\mathcal{H}_{s}$, here $L(\cdot,\cdot)$ is a continuous function,
which is not decreasing with respect to each of their arguments. In
particular,
\begin{equation}
\lVert F(u)\rVert_{s-2\delta}\leq L(\lVert u\rVert_{s},0)\lVert u\rVert_{s}.
\label{Eq-14}%
\end{equation}

\end{proposition}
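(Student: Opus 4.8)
The plan is to reduce everything to two facts already available: $\mathcal{H}_{s}$ is a Banach algebra (Proposition~\ref{Prop1}, applicable since $s>n/2+2\delta\geq n/2$), and $P(\boldsymbol{D}):\mathcal{H}_{s}\rightarrow\mathcal{H}_{s-2\delta}$ is a continuous linear map (Lemma~\ref{Lemma2} with $s$ there replaced by $s-2\delta>n/2>0$). First I would record that for an integer $k\geq1$ and $u\in\mathcal{H}_{s}$ one has $u^{k}\in\mathcal{H}_{s}$ with $\lVert u^{k}\rVert_{s}\leq C(n,s)^{k-1}\lVert u\rVert_{s}^{k}$, by iterating $\lVert fg\rVert_{s}\leq C(n,s)\lVert f\rVert_{s}\lVert g\rVert_{s}$. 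Consequently $(\beta+1)u^{2}-u^{3}\in\mathcal{H}_{s}$, which embeds continuously into $\mathcal{H}_{s-2\delta}$ because $s-2\delta\leq s$; and $u^{m}\in\mathcal{H}_{s}$ gives $P(\boldsymbol{D})(u^{m})\in\mathcal{H}_{s-2\delta}$ with $\lVert P(\boldsymbol{D})(u^{m})\rVert_{s-2\delta}\leq C\lVert u^{m}\rVert_{s}$. Hence $F:\mathcal{H}_{s}\rightarrow\mathcal{H}_{s-2\delta}$ is well defined, the loss of $2\delta$ derivatives being caused solely by the nonlocal term.

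Next I would establish (\ref{Eq-13}) by factoring differences of powers. Writing $u^{k}-w^{k}=\bigl(\sum_{j=0}^{k-1}u^{j}w^{k-1-j}\bigr)(u-w)$ and applying the algebra inequality repeatedly gives, for $k=2,3,m$,
\[
\lVert u^{k}-w^{k}\rVert_{s}\leq C(n,s)^{k-1}\Bigl(\sum_{j=0}^{k-1}\lVert u\rVert_{s}^{j}\lVert w\rVert_{s}^{k-1-j}\Bigr)\lVert u-w\rVert_{s}.
\]
For the nonlocal part, linearity yields $P(\boldsymbol{D})(u^{m})-P(\boldsymbol{D})(w^{m})=P(\boldsymbol{D})(u^{m}-w^{m})$, so Lemma~\ref{Lemma2} together with the $k=m$ bound controls $\lVert P(\boldsymbol{D})(u^{m}-w^{m})\rVert_{s-2\delta}$ by an expression of the same type. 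Adding the three contributions, and using the embedding $\mathcal{H}_{s}\hookrightarrow\mathcal{H}_{s-2\delta}$ for the polynomial terms, the right-hand side takes the form $L(\lVert u\rVert_{s},\lVert w\rVert_{s})\lVert u-w\rVert_{s}$, where $L(a,b)$ is a polynomial in $a,b$ with nonnegative coefficients; in particular $L$ is continuous and nondecreasing in each argument.

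Finally, (\ref{Eq-14}) is the case $w=0$ of (\ref{Eq-13}), since $F(0)=0$; and continuity of $F$ follows from (\ref{Eq-13}) as well, because if $u_{n}\rightarrow u$ in $\mathcal{H}_{s}$ then $\sup_{n}\lVert u_{n}\rVert_{s}<\infty$ and $\lVert F(u_{n})-F(u)\rVert_{s-2\delta}\leq L(\lVert u_{n}\rVert_{s},\lVert u\rVert_{s})\lVert u_{n}-u\rVert_{s}\rightarrow0$ by monotonicity of $L$. I do not expect a serious obstacle: the only points requiring care are organizing the constants so that the assembled $L$ genuinely is nondecreasing in each variable, and verifying that the $2\delta$ derivative loss is confined to the $P(\boldsymbol{D})$ term while the cubic nonlinearity remains in $\mathcal{H}_{s}$.
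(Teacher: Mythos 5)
Your proposal is correct and follows essentially the same route as the paper: factor the differences of powers, control each product with the Banach-algebra property of $\mathcal{H}_{s}$ (Proposition \ref{Prop1}), treat the nonlocal term by linearity of $P(\boldsymbol{D})$ and its mapping property, and obtain $L$ as a polynomial with nonnegative coefficients. The only cosmetic difference is that you invoke Lemma \ref{Lemma2} together with the algebra bound separately, whereas the paper applies the packaged estimate of Lemma \ref{Lemma3} to $P(\boldsymbol{D})\left((u-w)q(u,w)\right)$ with $q(u,w)=\sum_{k=0}^{m-1}u^{k}w^{m-1-k}$, which amounts to the same thing.
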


\begin{proof}
We first notice that%
\begin{gather*}
F(u)-F(w)=(\beta+1)(u^{2}-w^{2})-(u^{3}-w^{3})+P\left(  \boldsymbol{D}\right)
(u^{m}-w^{m})\\
=(\beta+1)(u-w)\left(  u+w\right)  -(u-w)(u^{2}+uw+w^{2})+P\left(
\boldsymbol{D}\right)  ((u-w)q(u,w)),
\end{gather*}
where $q(u,w)=\sum_{k=0}^{m-1}u^{k}w^{m-1-k}$. By using Proposition
\ref{Prop1} and Lemma \ref{Lemma3}, the condition $s>n/2$ implies that if
$u,w\in\mathcal{H}_{s}$, then any polynomial function in $u,w$ belongs to
$\mathcal{H}_{s}$, and%
\begin{multline*}
\left\Vert F(u)-F(w)\right\Vert _{s-2\delta}\leq C\left\{  (\beta+1)\lVert
u-w\rVert_{s-2\delta}\lVert u+w\rVert_{s-2\delta}+\right. \\
\left.  \lVert u-w\rVert_{s-2\delta}\lVert u^{2}+uw+w^{2}\rVert_{s-2\delta
}+\lVert u-w\rVert_{s}\left\Vert q(u,w)\right\Vert _{s}\right\}  ,
\end{multline*}
where $C=$ $C(n,s,\delta)$. Then
\[
\left\Vert F(u)-F(w)\right\Vert _{s-2\delta}\leq A(\lVert u\rVert_{s},\lVert
w\rVert_{s})\lVert u-w\lVert_{s},
\]
where%
\begin{multline*}
A(\lVert u\rVert_{s},\lVert w\rVert_{s})=C\left\{  (\beta+1)\lVert
u+w\rVert_{s}+\lVert u^{2}+uw+w^{2}\rVert_{s}+\left\Vert q(u,w)\right\Vert
_{s}\right\} \\
\leq C\left\{  (\beta+1)\lVert u\rVert_{s}+(\beta+1)\lVert w\rVert_{s}+\lVert
u^{2}\rVert_{s}+\lVert uw\rVert_{s}+\lVert w^{2}\rVert_{s}+\sum_{k=0}%
^{m-1}\left\Vert u^{k}w^{m-1-k}\right\Vert _{s}\right\} \\
\leq C(\beta+1)\lVert u\rVert_{s}+C(\beta+1)\lVert w\rVert_{s}+C^{2}\lVert
u\rVert_{s}^{2}+C^{2}\lVert u\rVert_{s}\lVert w\rVert_{s}+C^{2}\lVert
w\rVert_{s}^{2}+\\
C^{m+1}\sum_{k=0}^{m-1}\left\Vert u\right\Vert _{s}^{k}\left\Vert w\right\Vert
_{s}^{m-1-k}=:L(\lVert u\lVert_{s},\lVert w\lVert_{s}).
\end{multline*}

\end{proof}

For $M,T>0$ and $f_{0}\in\mathcal{H}_{s}$, we set
\[
\mathcal{X}(M,T,f_{0}):=\left\{  u(t)\in C\left(  [0,T];\mathcal{H}%
_{s}\right)  ;\sup_{t\in\lbrack0,T]}\Vert u(t)-V(t)f_{0}\Vert_{s}\leq
M\right\}  .
\]
We endow $\mathcal{X}(M,T,f_{0})$ with the metric $d(u(t),v(t))=\sup
_{t\in\lbrack0,T]}\Vert u(t)-v(t)\Vert_{s}$. The resulting metric space is complete.

\begin{proposition}
\label{Prop3}Take $f_{0}\in\mathcal{H}_{s}$ with $s>n/2+2\delta$, $\delta>0$.
Then, there exists $T=T(\lVert f_{0}\rVert_{s},M)>0$ and a unique function
$u\in C([0,T];\mathcal{H}_{s})$ satisfying the integral equation
\begin{equation}
u(t)=\boldsymbol{V}(t)f_{0}+\int\nolimits_{0}^{t}\boldsymbol{V}(t-\tau
)F(u(\tau))d\tau, \label{Mild_Solution}%
\end{equation}
such that $u(0)=f_{0}$. Here $F(u)=(\beta+1)u^{2}-u^{3}+P(\boldsymbol{D}%
)(u^{m})$ as before.
\end{proposition}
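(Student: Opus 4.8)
The plan is to prove Proposition \ref{Prop3} by a fixed-point argument on the complete metric space $\mathcal{X}(M,T,f_{0})$. Define the operator
\[
(\Phi u)(t)=\boldsymbol{V}(t)f_{0}+\int_{0}^{t}\boldsymbol{V}(t-\tau)F(u(\tau))\,d\tau,
\]
and show that, for a suitable choice of $T=T(\lVert f_{0}\rVert_{s},M)$, $\Phi$ maps $\mathcal{X}(M,T,f_{0})$ into itself and is a contraction there. The unique fixed point is then the sought mild solution, and continuity of $t\mapsto u(t)$ in $\mathcal{H}_{s}$ follows because the right-hand side is manifestly continuous (the first term by Lemma \ref{Lemma0}, the integral term by strong continuity of $\boldsymbol{V}$ together with the local boundedness of $F(u(\tau))$ in $\mathcal{H}_{s-2\delta}$).

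First I would check that $\Phi$ is well-defined: if $u\in\mathcal{X}(M,T,f_{0})$, then $\sup_{t}\lVert u(t)\rVert_{s}\le M+\sup_{t}\lVert\boldsymbol{V}(t)f_{0}\rVert_{s}\le M+\lVert f_{0}\rVert_{s}=:R$ by Lemma \ref{Lemma0}. By Proposition \ref{Prop2}, $F(u(\tau))\in\mathcal{H}_{s-2\delta}$ with $\lVert F(u(\tau))\rVert_{s-2\delta}\le L(R,0)R$. To land back in $\mathcal{H}_{s}$, apply the smoothing estimate of Lemma \ref{LemmaA} with $\lambda=2\delta$: for $0<\tau<t$,
\[
\lVert\boldsymbol{V}(t-\tau)F(u(\tau))\rVert_{s}\le e^{-\beta(t-\tau)}\Bigl(1+C(2\delta,\alpha)\bigl((t-\tau)^{-1}\tfrac{\delta}{\alpha\gamma}\bigr)^{\delta/\alpha}\Bigr)\lVert F(u(\tau))\rVert_{s-2\delta}.
\]
Hence
\[
\Bigl\lVert\int_{0}^{t}\boldsymbol{V}(t-\tau)F(u(\tau))\,d\tau\Bigr\rVert_{s}\le L(R,0)R\int_{0}^{t}\Bigl(1+C'(t-\tau)^{-\delta/\alpha}\Bigr)d\tau,
\]
and since $\delta/\alpha$ can be made $<1$ is \emph{not} automatic, I note the argument needs $\delta<\alpha$ — if that is not assumed, one instead uses $\lambda=2\delta$ only when $2\delta/(2\alpha)=\delta/\alpha<1$; in the paper's setting this integrability $\int_0^t(t-\tau)^{-\delta/\alpha}d\tau=\frac{\alpha}{\alpha-\delta}t^{1-\delta/\alpha}$ holds whenever $\delta<\alpha$, which I will assume (or otherwise split $P(\boldsymbol D)$ termwise and observe each $\delta_j$ contributes an integrable singularity provided $\delta_j<\alpha$). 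Thus the integral term is bounded by $L(R,0)R\bigl(t+C''t^{1-\delta/\alpha}\bigr)$, which is $\le M$ for $T$ small enough; so $\Phi(\mathcal{X})\subseteq\mathcal{X}$.

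For the contraction estimate, take $u,w\in\mathcal{X}(M,T,f_{0})$, both bounded by $R$ in $\mathcal{H}_{s}$. Then
\[
(\Phi u)(t)-(\Phi w)(t)=\int_{0}^{t}\boldsymbol{V}(t-\tau)\bigl(F(u(\tau))-F(w(\tau))\bigr)d\tau,
\]
and using Lemma \ref{LemmaA} with $\lambda=2\delta$ and the Lipschitz bound \eqref{Eq-13} from Proposition \ref{Prop2},
\[
\lVert(\Phi u)(t)-(\Phi w)(t)\rVert_{s}\le L(R,R)\,d(u,w)\int_{0}^{t}e^{-\beta(t-\tau)}\Bigl(1+C'(t-\tau)^{-\delta/\alpha}\Bigr)d\tau\le L(R,R)\bigl(T+C''T^{1-\delta/\alpha}\bigr)d(u,w).
\]
Choosing $T$ small enough that $L(R,R)\bigl(T+C''T^{1-\delta/\alpha}\bigr)\le\tfrac12$ makes $\Phi$ a contraction, and the Banach fixed-point theorem gives the unique $u\in\mathcal{X}(M,T,f_{0})$. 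Uniqueness in the larger class $C([0,T];\mathcal{H}_{s})$ (not just within $\mathcal{X}$) follows by a standard Gronwall-type argument: any two solutions agree on a maximal subinterval, and the same singular-kernel estimate forces them to coincide. Finally, evaluating \eqref{Mild_Solution} at $t=0$ gives $u(0)=\boldsymbol{V}(0)f_{0}=f_{0}$.

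The main obstacle I anticipate is the handling of the singular kernel $(t-\tau)^{-\delta/\alpha}$ coming from the loss of $2\delta$ derivatives in the nonlinearity versus the $2\delta$-regularization gained from the parabolic semigroup: one must verify its integrability (i.e. that $\delta<\alpha$, or treat each $\delta_j$ separately) and track the resulting non-integer power $T^{1-\delta/\alpha}$ carefully through both the self-mapping and contraction estimates, since $T$ must be chosen uniformly small in terms of $R=M+\lVert f_{0}\rVert_{s}$ and the continuous, monotone function $L$. Everything else is a routine Picard iteration in a Banach space of time-dependent Sobolev functions.
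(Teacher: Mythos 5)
Your proposal is correct in substance and has the same skeleton as the paper's proof: a Banach fixed-point argument for $\boldsymbol{N}u(t)=\boldsymbol{V}(t)f_{0}+\int_{0}^{t}\boldsymbol{V}(t-\tau)F(u(\tau))\,d\tau$ on the complete metric space $\mathcal{X}(M,T,f_{0})$, with invariance and contraction for small $T$ obtained from the smoothing estimate of Lemma \ref{LemmaA} combined with Proposition \ref{Prop2}. The genuine difference is the choice of smoothing exponent. You take $\lambda=2\delta$, which exactly compensates the loss of $2\delta$ in $F$, produces the kernel $(t-\tau)^{-\delta/\alpha}$, and forces the restriction $\delta<\alpha$, which you state openly (your remark about treating $P(\boldsymbol{D})$ termwise does not relax it, since $\delta_{k}=\delta$). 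The paper instead takes $\lambda=\alpha$, so its kernel is the always-integrable $(t-\tau)^{-1/2}$, but it then measures the nonlinearity in $\Vert F(u(\tau))\Vert_{s-\alpha}$, which Proposition \ref{Prop2} only controls (via $\Vert F(u)\Vert_{s-2\delta}$) when $\alpha\geq2\delta$; so each argument carries a parameter restriction not visible in the statement, and yours is the weaker and the explicitly acknowledged one. Two loose ends in your write-up: first, continuity of $t\mapsto\boldsymbol{N}u(t)$ in $\mathcal{H}_{s}$ is not ``manifest''; the paper proves it by splitting the integrals and applying dominated convergence against the integrable singular kernel, and you should do the same rather than appeal to strong continuity alone. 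Second, the fixed-point theorem only gives uniqueness within $\mathcal{X}(M,T,f_{0})$ (which is also all the paper's own proof of Proposition \ref{Prop3} establishes); your sketched Gr\"onwall argument for uniqueness in all of $C([0,T];\mathcal{H}_{s})$ does not follow from the classical Gr\"onwall lemma once the kernel $(t-\tau)^{-\delta/\alpha}$ is present, so you would need a singular (Henry-type) Gr\"onwall inequality or an argument along the lines of the paper's Proposition \ref{Prop5} to close it.
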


\begin{remark}
Since $F(u)$ is not a locally Lipschitz function because inequality
(\ref{Eq-14}) involves two different norms, the existence of mild solutions of
type (\ref{Mild_Solution}) does not follow directly from standard results in
semigroup theory, see e.g. \cite[Theorem 5.2.2]{Milan}.
\end{remark}

\begin{proof}
Given $u\in\mathcal{X}(M,T,f_{0})$, we set
\[
\boldsymbol{N}u(t)=\boldsymbol{V}(t)f_{0}+\int\nolimits_{0}^{t}\boldsymbol{V}%
(t-\tau)F(u(\tau))d\tau.
\]
\textbf{Claim\ 1}. $\boldsymbol{N}:\mathcal{X}(M,T,f_{0})\longrightarrow
C([0,T];\mathcal{H}_{s})$.

Take $u\in\mathcal{X}(M,T,f_{0})$, then%
\begin{gather}
\left\Vert \boldsymbol{N}u\left(  t_{1}\right)  -\boldsymbol{N}u\left(
t_{2}\right)  \right\Vert _{s}\leq\left\Vert \left(  \boldsymbol{V}\left(
t_{1}\right)  -\boldsymbol{V}\left(  t_{2}\right)  \right)  f_{0}\right\Vert
_{s}\label{Eq-15}\\
+\left\Vert \int\nolimits_{0}^{t_{1}}\boldsymbol{V}\left(  t_{1}-\tau\right)
F(u(\tau))d\tau-\int\nolimits_{0}^{t_{2}}\boldsymbol{V}\left(  t_{2}%
-\tau\right)  F(u(\tau))d\tau\right\Vert _{s}.\nonumber
\end{gather}
Since $\{\boldsymbol{V}(t)\}_{t\geq0}$ is a $C_{0}-$semigroup in
$\mathcal{H}_{s}$, cf. Lemma \ref{Lemma0}, the first term on the right-hand
side of the inequality (\ref{Eq-15}) tends to zero when $t_{2}\rightarrow
t_{1}$. To study the second term, we assume without loss of generality that
$0<t_{1}<t_{2}<T$. Then%
\begin{align*}
&  \left\Vert \int\nolimits_{0}^{t_{1}}\boldsymbol{V}\left(  t_{1}%
-\tau\right)  F(u(\tau))d\tau-\int\nolimits_{0}^{t_{2}}\boldsymbol{V}\left(
t_{2}-\tau\right)  F(u(\tau))d\tau\right\Vert _{s}\\
&  \leq\int\nolimits_{0}^{t_{1}}\left\Vert \{\boldsymbol{V}\left(  t_{1}%
-\tau\right)  -\boldsymbol{V}\left(  t_{2}-\tau\right)  \}F(u(\tau
))\right\Vert _{s}d\tau+\int\nolimits_{t_{1}}^{t_{2}}\left\Vert \boldsymbol{V}%
\left(  t_{2}-\tau\right)  F(u(\tau))\right\Vert _{s}d\tau.
\end{align*}
By using Lemma \ref{LemmaA} with $\lambda=\alpha$ and Proposition
\ref{Prop2},
\begin{gather*}
\left\Vert \left(  \boldsymbol{V}\left(  t_{1}-\tau\right)  -\boldsymbol{V}%
\left(  t_{2}-\tau\right)  \right)  F(u(\tau))\right\Vert _{s}\\
\leq\left\Vert \boldsymbol{V}\left(  t_{1}-\tau\right)  F(u(\tau))\right\Vert
_{s}+\left\Vert \boldsymbol{V}\left(  t_{2}-\tau\right)  F(u(\tau))\right\Vert
_{s}\\
\leq\left\{  2+C_{0}\left(  \frac{1}{2\gamma(t_{1}-\tau)}\right)  ^{\frac
{1}{2}}+C_{0}\left(  \frac{1}{2\gamma(t_{2}-\tau)}\right)  ^{\frac{1}{2}%
}\right\}  \Vert F(u(\tau))\Vert_{s-\alpha}\\
\leq2\left\{  1+C_{0}\left(  \frac{1}{2\gamma(t_{1}-\tau)}\right)  ^{\frac
{1}{2}}\right\}  \sup_{\tau\in\lbrack0,T]}\Vert F\left(  u(\tau)\right)
\Vert_{s-\alpha}\\
=A(T,s,\alpha)\left\{  1+C_{0}\left(  \frac{1}{2\gamma(t_{1}-\tau)}\right)
^{\frac{1}{2}}\right\}  \in L^{1}([0,t_{1}]).
\end{gather*}
Now, by applying the dominated convergence theorem,
\[
\lim\limits_{t_{2}\rightarrow t_{1}}\int\nolimits_{0}^{t_{1}}\left\Vert
\left(  \boldsymbol{V}\left(  t_{1}-\tau\right)  -\boldsymbol{V}\left(
t_{2}-\tau\right)  \right)  F(u(\tau))\right\Vert _{s}d\tau=0.
\]
By a similar argument, one shows that%
\[
\left\Vert \boldsymbol{V}\left(  t_{2}-\tau\right)  F(u(\tau))\right\Vert
_{s-2\delta}\leq1+C_{0}\left(  \frac{1}{2\gamma(t_{2}-\tau)}\right)
^{\frac{1}{2}}L(\lVert u\left(  \tau\right)  \rVert_{s},0)\lVert u\left(
\tau\right)  \rVert_{s},
\]
and since
\begin{equation}
\Vert u(\tau)\Vert_{s}\leq\Vert u(\tau)-\boldsymbol{V}(\tau)f_{0}\Vert
_{s}+\Vert\boldsymbol{V}(\tau)f_{0}\Vert_{s}\leq M+\Vert f_{0}\Vert_{s},\text{
for all }\tau\in\lbrack0,T], \label{Eq-15A}%
\end{equation}
we have
\begin{gather}
\int\nolimits_{t_{1}}^{t_{2}}\left\Vert \boldsymbol{V}(t_{2}-\tau
)F(u(\tau))\right\Vert _{s}d\tau\label{Eq-16}\\
\leq L(M+\Vert f_{0}\Vert_{s},0)(M+\Vert f_{0}\Vert_{s})\left(  \int
\nolimits_{t_{1}}^{t_{2}}\left(  1+C_{0}\left(  \frac{1}{2\gamma(t_{2}-\tau
)}\right)  ^{\frac{1}{2}}\right)  d\tau\right) \nonumber\\
=L\left(  M+\Vert f_{0}\Vert_{s},0\right)  \left(  M+\Vert f_{0}(\cdot
)\Vert_{s}\right)  \left(  (t_{2}-t_{1})+C_{0}\left(  \sqrt{\frac
{2(t_{2}-t_{1})}{\gamma}}\right)  \right)  ,\nonumber
\end{gather}
and consequently, by applying the dominated convergence theorem,
\[
\lim_{t_{2}\rightarrow t_{1}}\int\nolimits_{t_{1}}^{t_{2}}\lVert
\boldsymbol{V}(t_{2}-\tau)F(u(\tau))\rVert_{s}d\tau=0.
\]

\textbf{Claim\ 2}. There\ exists\ $T_{0}$\ such\ that\ $\boldsymbol{N}%
(\mathcal{X}(M,T_{0},f_{0}))\subseteq\mathcal{X}(M,T_{0},f_{0})$.

By using a reasoning similar to the one used to established inequality
(\ref{Eq-16}), one gets%
\begin{gather*}
\Vert(\boldsymbol{N}u)(t)-\boldsymbol{V}(t)f_{0}\Vert_{s}\leq\int
\nolimits_{0}^{t}\Vert\boldsymbol{V}(t-\tau)F(u(\tau))\Vert_{s}d\tau\\
\leq L\left(  M+\Vert f_{0}\Vert_{s},0\right)  \left(  M+\Vert f_{0}\Vert
_{s}\right)  \left(  \int\nolimits_{0}^{t}\left(  1+C_{0}\left(  \frac
{1}{2\gamma(t-\tau)}\right)  ^{\frac{1}{2}}\right)  d\tau\right) \\
\leq L\left(  M+\Vert f_{0}\Vert_{s},0\right)  \left(  M+\Vert f_{0}\Vert
_{s}\right)  \left(  T+C_{0}\left(  \sqrt{\frac{2T}{\gamma}}\right)  \right)
.
\end{gather*}
Now taking $T_{0}$ such that
\begin{equation}
L\left(  M+\Vert f_{0}\Vert_{s},0\right)  \left(  M+\Vert f_{0}\Vert
_{s}\right)  \left(  T_{0}+C_{0}\left(  \sqrt{\frac{2T_{0}}{\gamma}}\right)
\right)  \leq M, \label{Eq-17}%
\end{equation}
we conclude that $\boldsymbol{N}u\in\mathcal{X}(M,T_{0},f_{0})$, for all
$u(t)\in\mathcal{X}(M,T_{0},f_{0})$.

\textbf{Claim\ 3.} There\ exists\ $T_{0}^{\prime}$%
\ such\ that\ $\boldsymbol{N}\ $is\ a\ contraction\ on$\ \mathcal{X}%
(M,T_{0}^{\prime},f_{0})$.

Given $u(t)$, $v(t)\in\mathcal{X}(M,T_{0},f_{0})$, by using Proposition
\ref{Prop2}, with
\[
C_{0}^{\prime}=L\left(  M+\Vert f_{0}\Vert_{s},M+\Vert f_{0}\Vert_{s}\right)
,
\]
see (\ref{Eq-15A}), we have%
\begin{align*}
\Vert\boldsymbol{N}u(t)-\boldsymbol{N}v(t)\Vert_{s}  &  \leq\int
\nolimits_{0}^{t}\Vert\boldsymbol{V}(t-\tau)[F(u(\tau))-F(v(\tau))]\Vert
_{s}d\tau\\
&  \leq\int\nolimits_{0}^{t}\left(  1+C_{0}\left(  \frac{1}{2\gamma(t-\tau
)}\right)  ^{\frac{1}{2}}\right)  \Vert F(u(\tau))-F(v(\tau))\Vert_{s-\alpha
}\text{ }d\tau\\
&  \leq C_{0}^{\prime}\int\nolimits_{0}^{t}\left(  1+C_{0}\left(  \frac
{1}{2\gamma(t-\tau)}\right)  ^{\frac{1}{2}}\right)  \Vert u(\tau)-v(\tau
)\Vert_{s}d\tau\\
&  \leq C_{0}^{\prime}\left(  \sup_{\tau\in\lbrack0,T_{0}]}\Vert
u(\tau)-v(\tau)\Vert_{s}\right)  \int\nolimits_{0}^{t}\left(  1+C_{0}\left(
\frac{1}{2\gamma(t-\tau)}\right)  ^{\frac{1}{2}}\right)  d\tau\\
&  \leq C_{0}^{\prime}\left(  T_{0}+C_{0}\left(  \sqrt{\frac{2T_{0}}{\gamma}%
}\right)  \right)  d(u(t),v(t)).
\end{align*}
Thus, taking $T_{0}^{\prime}$ such that
\begin{equation}
C:=C_{0}^{\prime}\left(  T_{0}^{\prime}+C_{0}\left(  \sqrt{\frac
{2T_{0}^{\prime}}{\gamma}}\right)  \right)  <1, \label{Eq-18}%
\end{equation}
we obtain that $d(\boldsymbol{N}u(t),\boldsymbol{N}v(t))\leq Cd(u(t),v(t))$,
that is, $\boldsymbol{N}$ is a strict contraction in $\mathcal{X}%
(M,T_{0}^{\prime},f_{0})$. We pick $T$ such that the inequalities
(\ref{Eq-17}) and (\ref{Eq-17}) hold true, and apply the Banach Fixed Point
Theorem to get $u(t)\in\mathcal{X}(M,T,f_{0})$ a unique fixed point of
$\boldsymbol{N}$, which satisfies the integral equation (\ref{Mild_Solution}),
where $T=T(\lVert f_{0}\rVert_{s},M)>0$.
\end{proof}

\begin{remark}
\label{Nota1}Let $\mathcal{X}$ be a Banach space and let $\boldsymbol{A}%
:Dom(\boldsymbol{A})\rightarrow\mathcal{X}$ be an operator with dense domain
such that $\boldsymbol{A}$ is the infinitesimal generator of a contraction
semigroup $\left(  \boldsymbol{S}_{t}\right)  _{t\geq0}$. Fix $T>0$ and let
$f:\left[  0,T\right]  \rightarrow\mathcal{X}$ be a continuous function.
Consider the Cauchy problem:%
\begin{equation}
\left\{
\begin{array}
[c]{l}%
u\in C\left(  \left[  0,T\right]  ,Dom(\boldsymbol{A})\right)  \cap
C^{1}\left(  \left[  0,T\right]  ,\mathcal{X}\right)  ;\\
\\
u_{t}=\boldsymbol{A}u+f(t),\ \ t\in\left[  0,T\right]  ;\\
\\
u(0)=u_{0}\in\mathcal{X}.
\end{array}
\right.  \label{Cauchy-Problem_2}%
\end{equation}
Then
\begin{equation}
u(t)=\boldsymbol{S}(t)u_{0}+\int\nolimits_{0}^{t}\boldsymbol{S}(t-\tau
)f(\tau))d\tau, \label{Mild_Solution_1}%
\end{equation}
for $t\in\left[  0,T\right]  $, see e.g. \cite[Lemma 4.1.1]{C-H}. Conversely,
if $u_{0}\in Dom(\boldsymbol{A})$, $f\in C\left(  \left[  0,T\right]
,\mathcal{X}\right)  $,
\[%
%TCIMACRO{\dint \limits_{\left(  0,T\right)  }}%
%BeginExpansion
{\displaystyle\int\limits_{\left(  0,T\right)  }}
%EndExpansion
\left\Vert f\left(  \tau\right)  \right\Vert _{\mathcal{X}}\text{ }%
d\tau<\infty,
\]
then a solution of (\ref{Mild_Solution_1}) is a solution of the Cauchy problem
(\ref{Cauchy-Problem_2}), see e.g. \cite[Proposition 4.1.6]{C-H}.
\end{remark}

\begin{proposition}
\label{Prop4}The problem (\ref{Cauchy-Problem}) is equivalent to the integral
equation (\ref{Mild_Solution}). More precisely, if $s>n/2+2\delta$, and
$u(t)\in C([0,T];\mathcal{H}_{s})\cap C^{1}((0,T];\mathcal{H}_{s-2\delta})$ is
a solution of (\ref{Cauchy-Problem}), then $u(t)$ satisfies the integral
equation (\ref{Mild_Solution}). Conversely, if $s>n/2+2\delta$, and $u(t)\in
C([0,T];\mathcal{H}_{s})$ is a solution of (\ref{Mild_Solution}), then
$u(t)\in C^{1}([0,T];\mathcal{H}_{s-2\delta})$ and it satisfies
(\ref{Cauchy-Problem}).
\end{proposition}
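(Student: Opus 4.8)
The plan is to prove the two implications of Proposition \ref{Prop4} separately, using the abstract theory recalled in Remark \ref{Nota1} with $\mathcal{X}=\mathcal{H}_{s-2\delta}$, $\boldsymbol{A}=-\gamma\boldsymbol{D}^{\alpha}-\beta\boldsymbol{I}$ (whose semigroup is $\boldsymbol{V}(t)$, shown to be a $C^0$-semigroup of contractions on every $\mathcal{H}_{\sigma}$ in Lemma \ref{Lemma0}), and the inhomogeneous term $f(t)=F(u(t))$, where $F(u)=(\beta+1)u^2-u^3+P(\boldsymbol{D})(u^m)$. First I would rewrite the PDE in \eqref{Cauchy-Problem} as $u_t=\boldsymbol{A}u+F(u)$, so that a genuine solution in $C([0,T];\mathcal{H}_s)\cap C^1((0,T];\mathcal{H}_{s-2\delta})$ is in particular a solution of the abstract problem \eqref{Cauchy-Problem_2} posed in $\mathcal{X}=\mathcal{H}_{s-2\delta}$, noting that $\mathcal{H}_s\subseteq\mathcal{H}_{s-2\delta}$ and that $\boldsymbol{A}$ maps $\mathcal{H}_s$ (indeed its natural domain contains $\mathcal{H}_{s+\alpha}$, but here we only need $u(t)\in\mathcal{H}_s$ together with $\boldsymbol{D}^{\alpha}u(t)\in\mathcal{H}_{s-2\delta}$, which holds since $\alpha\le 2\delta$ is not assumed — so I should instead invoke the first part of Remark \ref{Nota1} with $Dom(\boldsymbol{A})$ the maximal domain in $\mathcal{H}_{s-2\delta}$, observing $\mathcal{H}_s\subset Dom(\boldsymbol{A})$ only when $\alpha\le 2\delta$; if that fails one works with the weaker regularity $C^1((0,T];\mathcal{H}_{s-2\delta})$ and a limiting/approximation argument). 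Granting the hypotheses of Remark \ref{Nota1}, the variation-of-constants formula \eqref{Mild_Solution_1} gives exactly \eqref{Mild_Solution}. To verify the integrability hypothesis $\int_{(0,T)}\|F(u(\tau))\|_{\mathcal{X}}\,d\tau<\infty$, I would use Proposition \ref{Prop2}: since $u\in C([0,T];\mathcal{H}_s)$, the map $\tau\mapsto F(u(\tau))$ is continuous from $[0,T]$ into $\mathcal{H}_{s-2\delta}=\mathcal{X}$ with $\|F(u(\tau))\|_{s-2\delta}\le L(\|u(\tau)\|_s,0)\|u(\tau)\|_s$, which is bounded on the compact interval $[0,T]$; hence the integral is finite. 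This yields the forward implication.

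For the converse, suppose $u\in C([0,T];\mathcal{H}_s)$ satisfies the integral equation \eqref{Mild_Solution}. I would first show $u\in C^1([0,T];\mathcal{H}_{s-2\delta})$. By Proposition \ref{Prop2}, $g(\tau):=F(u(\tau))$ lies in $C([0,T];\mathcal{H}_{s-2\delta})$, and since $u(0)=f_0\in\mathcal{H}_s\subset\mathcal{H}_{s-2\delta}$ is in the domain of $\boldsymbol{A}$ viewed as the generator of $\boldsymbol{V}(t)$ on $\mathcal{H}_{s-2\delta}$ (here one needs that $\boldsymbol{D}^{\alpha}f_0\in\mathcal{H}_{s-2\delta}$, which follows from $f_0\in\mathcal{H}_s$ provided $\alpha\le 2\delta$; otherwise I would take $f_0\in\mathcal{H}_{s}$ with $s$ large enough that $f_0\in Dom(\boldsymbol{A})$ in $\mathcal{H}_{s-2\delta}$, i.e.\ $f_0\in\mathcal{H}_{\max(s,\,s-2\delta+\alpha)}$, and remark that the stronger hypothesis $s>n/2+2\delta$ together with the smoothing Lemma \ref{LemmaA} lets one bootstrap). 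The converse part of Remark \ref{Nota1}, i.e.\ \cite[Proposition 4.1.6]{C-H}, then tells us that the mild solution \eqref{Mild_Solution_1} is in fact a classical solution of \eqref{Cauchy-Problem_2} in $\mathcal{X}=\mathcal{H}_{s-2\delta}$: it belongs to $C([0,T];Dom(\boldsymbol{A}))\cap C^1([0,T];\mathcal{H}_{s-2\delta})$ and satisfies $u_t=\boldsymbol{A}u+F(u)$, which is precisely the PDE in \eqref{Cauchy-Problem}; combined with the hypothesis $u\in C([0,T];\mathcal{H}_s)$ and $u(0)=f_0$, this gives a solution of \eqref{Cauchy-Problem}.

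The main obstacle, as the preceding bracketed caveats indicate, is the mismatch between the two Sobolev indices $s$ and $s-2\delta$: the right-hand side $F(u)$ only lives in $\mathcal{H}_{s-2\delta}$, not in $\mathcal{H}_s$, so one cannot directly apply the semigroup-theoretic regularity results on $\mathcal{H}_s$. The clean fix is to run the entire argument in the space $\mathcal{X}=\mathcal{H}_{s-2\delta}$, using that $u(t)$ actually stays in the smaller space $\mathcal{H}_s\subset\mathcal{H}_{s-2\delta}$ as extra information, and to note that $\boldsymbol{A}$ generates $\boldsymbol{V}(t)$ on $\mathcal{X}$ with dense domain (since $\mathcal{D}(\mathbb{Q}_p^n)$ is dense in every $\mathcal{H}_{\sigma}$, and $\mathcal{D}$ is in the domain of $\boldsymbol{D}^{\alpha}$). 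One must carefully check that the time-derivative computation $\frac{d}{dt}\int_0^t\boldsymbol{V}(t-\tau)F(u(\tau))\,d\tau=F(u(t))+\boldsymbol{A}\int_0^t\boldsymbol{V}(t-\tau)F(u(\tau))\,d\tau$ is valid in $\mathcal{H}_{s-2\delta}$; this is exactly what \cite[Proposition 4.1.6]{C-H} guarantees once $F(u(\cdot))\in C([0,T];\mathcal{H}_{s-2\delta})$ (from Proposition \ref{Prop2}) and $f_0\in Dom(\boldsymbol{A})$, so modulo these domain bookkeeping points the proof reduces to quoting Remark \ref{Nota1} in both directions. I would also record, via Proposition \ref{Prop2} and the continuity of $t\mapsto u(t)$ in $\mathcal{H}_s$, that $t\mapsto F(u(t))$ is continuous into $\mathcal{H}_{s-2\delta}$, which is the one genuinely new (though routine) verification needed beyond the abstract citations.
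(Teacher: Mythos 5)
Your proposal is correct and takes essentially the same route as the paper: both reduce the equivalence to Remark \ref{Nota1} (i.e.\ \cite[Lemma 4.1.1]{C-H} and \cite[Proposition 4.1.6]{C-H}) applied in $\mathcal{X}=\mathcal{H}_{s-2\delta}$ with $\boldsymbol{A}=-\gamma\boldsymbol{D}^{\alpha}-\beta\boldsymbol{I}$ and $f(t)=F(u(t))$, using Proposition \ref{Prop2} to obtain $F(u(\cdot))\in C([0,T];\mathcal{H}_{s-2\delta})$ and the finiteness of $\int_{0}^{T}\left\Vert F(u(\tau))\right\Vert_{s-2\delta}d\tau$. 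The domain bookkeeping you flag (whether $\mathcal{H}_{s}$ lies in the domain of the generator on $\mathcal{H}_{s-2\delta}$ when $\alpha$ is large relative to $\delta$) is not addressed in the paper either, which simply sets $Dom(\boldsymbol{A})=\mathcal{H}_{s}$, so your caveats are if anything more careful than the published argument.
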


\begin{proof}
It follows from Remark \ref{Nota1}, Propositions \ref{Prop3}, \ref{Prop2}, by
taking $\boldsymbol{A=}-\gamma\boldsymbol{D}_{x}^{\alpha}-\beta\boldsymbol{I}%
$, $Dom(\boldsymbol{A})=\mathcal{H}_{s}$, $\mathcal{X}=\mathcal{H}_{s-2\delta
}$, $f(t)=F(u\left(  t\right)  )$. We first recall that $\mathcal{D}%
\hookrightarrow\mathcal{H}_{s}\hookrightarrow\mathcal{H}_{s-2\delta}$, where
$\hookrightarrow$ means continuous embedding, \ an that $\mathcal{D}$ \ is
dense in $\mathcal{H}_{s-2\delta}$. If $u(t)$ is a solution of
(\ref{Cauchy-Problem}), then, since $F(u\left(  t\right)  )\in
C([0,T];\mathcal{H}_{s-2\delta})$, by Proposition \ref{Prop2}, $u\left(
t\right)  $ is a solution of (\ref{Mild_Solution}). Conversely, if $u\left(
t\right)  $ is a solution of (\ref{Mild_Solution}), since
\[%
%TCIMACRO{\dint \limits_{\left(  0,T\right)  }}%
%BeginExpansion
{\displaystyle\int\limits_{\left(  0,T\right)  }}
%EndExpansion
\left\Vert F(u\left(  \tau\right)  )\right\Vert _{s-2\delta}\text{ }%
d\tau<\infty,
\]
by Proposition \ref{Prop2}, $u\left(  t\right)  $ is a solution of
(\ref{Cauchy-Problem}).
\end{proof}

\begin{lemma}
[{\cite[Theorem 5.1.1]{Milan}}]\label{Lemma_Gronwall}If $h\in L^{1}\left(
0,T\right)  $, with $T>0$, is real-valued function such that. If
\[
h(t)\leq a+b\int\nolimits_{0}^{t}h(s)ds,
\]
for $t\in$ $(0,T)$ a.e., where $a\in\mathbb{R}$ and $b\in\left[
0,\infty\right)  $ then $h(t)\leq ae^{bt}$ for almost all $t$ in $(0,T)$.
\end{lemma}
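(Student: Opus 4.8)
The plan is to reduce the integral inequality to a differential inequality for the antiderivative of $h$ and then apply the classical integrating-factor argument, taking care that everything remains valid under the hypothesis that the inequality holds only almost everywhere. First I would dispose of the trivial case $b=0$: then the hypothesis reads $h(t)\leq a$ for a.e. $t$, which is precisely $h(t)\leq a\,e^{0\cdot t}$, so from now on assume $b>0$.

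Set $H(t):=\int_{0}^{t}h(s)\,ds$ for $t\in[0,T]$. Since $h\in L^{1}(0,T)$, the function $H$ is absolutely continuous on $[0,T]$, $H(0)=0$, and $H'(t)=h(t)$ for a.e. $t$; the hypothesis thus becomes the differential inequality $H'(t)\leq a+bH(t)$ for a.e. $t\in(0,T)$. Multiplying by the integrating factor $e^{-bt}$ and using the product rule — legitimate because $t\mapsto e^{-bt}H(t)$ is again absolutely continuous — one gets
\[
\frac{d}{dt}\bigl(e^{-bt}H(t)\bigr)=e^{-bt}\bigl(H'(t)-bH(t)\bigr)\leq a\,e^{-bt}\qquad\text{for a.e. }t.
\]
Because the left-hand side is the a.e.-derivative of an absolutely continuous function vanishing at $0$, integrating from $0$ to $t$ gives $e^{-bt}H(t)\leq a\int_{0}^{t}e^{-bs}\,ds=\frac{a}{b}\bigl(1-e^{-bt}\bigr)$, hence $H(t)\leq\frac{a}{b}\bigl(e^{bt}-1\bigr)$ for every $t\in[0,T]$. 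Inserting this back into the hypothesis yields, for a.e. $t$,
\[
h(t)\leq a+bH(t)\leq a+a\bigl(e^{bt}-1\bigr)=a\,e^{bt},
\]
which is the claimed bound; note that this computation is valid for every sign of $a$.

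There is no deep obstacle here; the only point demanding care is that the differential inequality $H'\leq a+bH$ holds only almost everywhere, so one cannot simply integrate a pointwise ODE. This is handled by observing that $e^{-bt}H(t)$ is absolutely continuous and therefore equals the Lebesgue integral of its a.e.-derivative, which is bounded above by $a\,e^{-bt}$. As an alternative that sidesteps absolute-continuity arguments entirely, one could iterate the integral inequality: substituting it into itself $n$ times produces $h(t)\leq a\sum_{k=0}^{n-1}\frac{(bt)^{k}}{k!}+R_{n}(t)$ with $\lvert R_{n}(t)\rvert\leq\frac{b^{n}t^{n-1}}{(n-1)!}\,\lVert h\rVert_{L^{1}(0,T)}\to 0$, and the partial sums converge to $a\,e^{bt}$.
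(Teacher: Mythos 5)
Your proof is correct. Note that the paper gives no proof of this lemma at all: it is simply quoted from the cited reference (\cite[Theorem 5.1.1]{Milan}), so there is nothing internal to compare against; your argument — passing to $H(t)=\int_0^t h(s)\,ds$, using absolute continuity to turn the a.e.\ integral inequality into an a.e.\ differential inequality, and integrating with the factor $e^{-bt}$ — is the standard proof of that result, and you handle the two delicate points (the inequality only holding almost everywhere, and $a$ being of arbitrary sign) correctly; the iteration argument you sketch as an alternative is also sound.
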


\begin{proposition}
\label{Prop5}Let $f_{0}$, $f_{1}\in\mathcal{H}_{s}$ and $u(t),v(t)\in
C[0,T];\mathcal{H}_{s})$ be the corresponding solutions of equation
(\ref{Mild_Solution}) with initial conditions $u(0)=f_{0}$ and $v(0)=f_{1}$,
respectively. If $s>n/2+2\delta$, then
\[
\Vert u(t)-v(t)\Vert_{s}\leq e^{L\left(  W,W\right)  }\Vert f_{0}-f_{1}%
\Vert_{s},
\]
where $L$ is given in Proposition \ref{Prop1} and
\[
W:=\max\left\{  \sup_{t\in\lbrack0,T]}\Vert u(t)\Vert_{s},\sup_{t\in
\lbrack0,T]}\Vert v(t)\Vert_{s}\right\}  .
\]

\end{proposition}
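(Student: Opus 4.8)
The plan is to prove the Lipschitz dependence on initial data (estimate (iii) in the definition of local well-posedness) by subtracting the two integral equations and applying a Gronwall-type argument via Lemma \ref{Lemma_Gronwall}. First I would write, for $u(t)$ and $v(t)$ solving (\ref{Mild_Solution}) with $u(0)=f_0$, $v(0)=f_1$,
\[
u(t)-v(t)=\boldsymbol{V}(t)(f_0-f_1)+\int_0^t\boldsymbol{V}(t-\tau)\bigl[F(u(\tau))-F(v(\tau))\bigr]d\tau ,
\]
and estimate the $\mathcal{H}_s$-norm of each side. Since $\{\boldsymbol{V}(t)\}_{t\geq0}$ is a semigroup of contractions in $\mathcal{H}_s$ (Lemma \ref{Lemma0}), the first term is bounded by $\Vert f_0-f_1\Vert_s$ (using $e^{-\beta t}\leq 1$). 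For the integral term I would use Lemma \ref{LemmaA} with $\lambda=\alpha$ to pass from the $\mathcal{H}_s$-norm after applying $\boldsymbol{V}(t-\tau)$ to the $\mathcal{H}_{s-\alpha}$-norm of $F(u(\tau))-F(v(\tau))$, picking up the factor $1+C_0\bigl(\tfrac{1}{2\gamma(t-\tau)}\bigr)^{1/2}$, exactly as in the proof of Proposition \ref{Prop3}; then I would apply the Lipschitz bound (\ref{Eq-13}) from Proposition \ref{Prop2} with the constant $L(\Vert u(\tau)\Vert_s,\Vert v(\tau)\Vert_s)\leq L(W,W)$ (using that $L$ is non-decreasing in each argument).

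Putting these together yields
\[
\Vert u(t)-v(t)\Vert_s\leq \Vert f_0-f_1\Vert_s + L(W,W)\int_0^t\Bigl(1+C_0\bigl(\tfrac{1}{2\gamma(t-\tau)}\bigr)^{1/2}\Bigr)\Vert u(\tau)-v(\tau)\Vert_s\,d\tau .
\]
The mild subtlety is that the integral kernel contains the singular factor $(t-\tau)^{-1/2}$, so the inequality is not immediately of the form required by Lemma \ref{Lemma_Gronwall}. I would handle this exactly as is implicitly done elsewhere in the paper: on the finite interval $[0,T]$ one has $\int_0^t (t-\tau)^{-1/2}d\tau = 2\sqrt{t}\leq 2\sqrt{T}$, so the singular part is integrable, and one can either (a) crudely bound $1+C_0\bigl(\tfrac{1}{2\gamma(t-\tau)}\bigr)^{1/2}$ by an $L^1$ function and absorb it, or more cleanly (b) iterate the inequality once so that the composition of two singular kernels becomes bounded, reducing to a genuine Gronwall situation. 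Either way, after reducing to $h(t)\leq a+b\int_0^t h(s)\,ds$ with $a=\Vert f_0-f_1\Vert_s$ and $b$ a constant depending on $L(W,W)$, $C_0$, $\gamma$, $T$, Lemma \ref{Lemma_Gronwall} gives $h(t)\leq a e^{bt}$.

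The main obstacle, then, is purely the bookkeeping around the non-integrable-looking but in fact integrable kernel $(t-\tau)^{-1/2}$: one must be careful that Lemma \ref{Lemma_Gronwall} as stated requires a bounded multiplier $b$, so a direct application is not legitimate and a short preliminary manipulation (splitting the kernel, or a single iteration) is needed. Once that is dealt with, the stated bound $\Vert u(t)-v(t)\Vert_s\leq e^{L(W,W)}\Vert f_0-f_1\Vert_s$ follows (absorbing the $\gamma$- and $T$-dependent constants into the exponent, which is presumably what the clean constant $e^{L(W,W)}$ in the statement is meant to encode). Finally I would remark that this proposition, combined with Proposition \ref{Prop3} (existence/uniqueness of the mild solution) and Proposition \ref{Prop4} (equivalence of the mild and classical formulations), furnishes all three conditions (i)--(iii) needed for Theorem \ref{Thorem1}; uniqueness in (ii) is in fact the special case $f_0=f_1$ of this proposition.
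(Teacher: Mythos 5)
Your proposal is correct and follows essentially the same route as the paper: subtract the two copies of (\ref{Mild_Solution}), use that $\boldsymbol{V}(t)$ is a contraction on $\mathcal{H}_{s}$ for the term $\boldsymbol{V}(t)(f_{0}-f_{1})$, apply the Lipschitz estimate (\ref{Eq-13}) together with the monotonicity of $L$ to extract the factor $L(W,W)$, and close with Lemma \ref{Lemma_Gronwall} taking $h(t)=\Vert u(t)-v(t)\Vert_{s}$, $a=\Vert f_{0}-f_{1}\Vert_{s}$. The one point of divergence is the treatment of the integral term: the paper writes $\Vert\boldsymbol{V}(t-\tau)\{F(u(\tau))-F(v(\tau))\}\Vert_{s}\leq\Vert F(u(\tau))-F(v(\tau))\Vert_{s-\alpha}$ with no time factor at all, i.e.\ it asserts a gain of $\alpha$ derivatives uniformly in $t-\tau$, and then applies Gronwall directly with $b=L(W,W)$, which is where the clean constant $e^{L(W,W)t}\leq e^{L(W,W)T}$ in the statement comes from. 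You instead invoke Lemma \ref{LemmaA} with $\lambda=\alpha$, which is what that lemma actually gives, and so you carry the singular kernel $1+C_{0}\bigl(2\gamma(t-\tau)\bigr)^{-1/2}$; this is the more honest version of the step, since the factor in (\ref{Eq-12}) blows up as $t-\tau\rightarrow0^{+}$ and cannot simply be discarded. Of your two proposed fixes, option (b) (iterate the inequality once so that the composed kernel is bounded, then apply Lemma \ref{Lemma_Gronwall}) works, while option (a) as phrased does not suffice on its own: integrability of the kernel is not enough for the plain Gronwall lemma, so commit to the iteration or to a singular (Henry-type) Gronwall inequality. The price of your more careful route is a constant of the form $e^{b(T,\gamma,C_{0},L(W,W))\,T}$ rather than the bare $e^{L(W,W)}$ of the statement; since the paper's own argument already produces a $t$-dependent exponent, that $T$-dependence is unavoidable, and your bookkeeping is the more defensible one. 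Your closing remark, that uniqueness in (ii) is the case $f_{0}=f_{1}$ and that this proposition supplies continuous dependence for Theorem \ref{Thorem1}, matches how the paper assembles the proof of the main result.
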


\begin{proof}
By using (\ref{Mild_Solution}), we have
\[
u(t)-v(t)=\boldsymbol{V}(t)(f_{0}-f_{1})+\int\nolimits_{0}^{t}\boldsymbol{V}%
(t-\tau)\{F(u(\tau))-F(v(\tau))\}d\tau.
\]
By using Proposition \ref{Prop1}, we get%
\begin{gather*}
\Vert u(t)-v(t)\Vert_{s}\leq\Vert f_{0}-f_{1}\Vert_{s}+\int\nolimits_{0}%
^{t}\Vert\boldsymbol{V}(t-\tau)\{F(u(\tau))-F(v(\tau))\}\Vert_{s}d\tau\\
\leq\Vert f_{0}-f_{1}\Vert_{s}+\int\nolimits_{0}^{t}\Vert F(u(\tau
))-F(v(\tau))\Vert_{s-\alpha}\text{ }d\tau\\
\leq\Vert f_{0}-f_{1}\Vert_{s}+L(W,W)\int\nolimits_{0}^{t}\Vert u(\tau
)-v(\tau)\Vert_{s}d\tau.
\end{gather*}
Now the result follow from \ Lemma \ref{Lemma_Gronwall}, by taking
\ $h(t)=\lVert u(t)-v(t)\rVert_{s}$, $a=\lVert f_{0}-f_{1}\rVert_{s}$,
$b=L(W,W)$.
\end{proof}

\begin{proposition}
\label{Prop6}Let $s>n/2+2\delta$ and $\delta\geq0$. Then, the map
$f_{0}\mapsto u(t)$ is continuous in the following sense: if $f_{0}%
^{(n)}\rightarrow f_{0}$ in $\mathcal{H}_{s}$ and $u_{n}(t)\in C\left(
\left[  0,T_{n}\right]  ;\mathcal{H}_{s}\right)  $, with $T_{n}=T\left(
\left\Vert f_{0}^{(n)}\right\Vert _{s},M\right)  >0$, are the corresponding
solutions to the Cauchy problem (\ref{Cauchy-Problem}) with $u_{n}%
(0)=f_{0}^{(n)}$. Then, there exist $T>0$ and a positive integer
$N=N(\gamma,f_{0},T)$ such that $T_{n}\geq T$ for all $n\geq N$ and
\begin{equation}
\lim_{n\rightarrow\infty}\sup_{t\in\lbrack0,T]}\left\Vert u_{n}%
(t)-u(t)\right\Vert _{s}=0. \label{Eq-20}%
\end{equation}

\end{proposition}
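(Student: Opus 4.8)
The plan is to combine the construction of Proposition \ref{Prop3}, made uniform in the initial datum, with the a priori stability estimate of Proposition \ref{Prop5}. The one genuine issue is that the existence time $T_n=T(\lVert f_0^{(n)}\rVert_s,M)$ furnished by the contraction argument a priori depends on the size of the datum, so one must first exclude the possibility that $T_n\to 0$.

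First I would note that, since $f_0^{(n)}\to f_0$ in $\mathcal{H}_s$, the set $\{\lVert f_0^{(n)}\rVert_s:n\in\mathbb{N}\}\cup\{\lVert f_0\rVert_s\}$ is bounded; fix $R>0$ dominating it. Then I would look back inside the proof of Proposition \ref{Prop3} and observe that the datum enters the constraints (\ref{Eq-17}) and (\ref{Eq-18}), which fix $T_0$ and $T_0^{\prime}$, only through the numbers $L(M+\lVert f_0\rVert_s,0)$ and $L(M+\lVert f_0\rVert_s,M+\lVert f_0\rVert_s)$, and that $L(\cdot,\cdot)$ is nondecreasing in each variable. Hence it suffices to choose one number $T=T(R,M,\gamma)>0$ with
\[
L(M+R,0)(M+R)\Bigl(T+C_0\sqrt{\tfrac{2T}{\gamma}}\Bigr)\le M
\qquad\text{and}\qquad
L(M+R,M+R)\Bigl(T+C_0\sqrt{\tfrac{2T}{\gamma}}\Bigr)<1 ;
\]
with this $T$ the argument of Proposition \ref{Prop3} runs verbatim on $[0,T]$ for every initial condition of $\mathcal{H}_s$-norm at most $R$. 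By the uniqueness part of Proposition \ref{Prop3} (restricting solutions to subintervals) this forces $T_n\ge T$ for all $n$ and also shows that $u$ is defined on $[0,T]$; so one may in fact take $N=1$, although the statement only asks for some $N=N(\gamma,f_0,T)$.

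Having placed $u_n$ and $u$ on the common interval $[0,T]$, the last step is to apply Proposition \ref{Prop5} to the pair $(u_n,u)$, which gives
\[
\sup_{t\in[0,T]}\lVert u_n(t)-u(t)\rVert_s\le e^{L(W_n,W_n)}\,\lVert f_0^{(n)}-f_0\rVert_s,
\qquad
W_n:=\max\Bigl\{\sup_{t\in[0,T]}\lVert u_n(t)\rVert_s,\ \sup_{t\in[0,T]}\lVert u(t)\rVert_s\Bigr\}.
\]
Since $u_n\in\mathcal{X}(M,T,f_0^{(n)})$ and $u\in\mathcal{X}(M,T,f_0)$, the a priori bound (\ref{Eq-15A}) yields $\sup_{t\in[0,T]}\lVert u_n(t)\rVert_s\le M+\lVert f_0^{(n)}\rVert_s\le M+R$ and likewise for $u$, so $W_n\le M+R=:W$ uniformly in $n$; using monotonicity of $L$ once more, $L(W_n,W_n)\le L(W,W)$. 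Therefore
\[
\sup_{t\in[0,T]}\lVert u_n(t)-u(t)\rVert_s\le e^{L(W,W)}\,\lVert f_0^{(n)}-f_0\rVert_s\longrightarrow 0
\]
as $n\to\infty$, which is exactly (\ref{Eq-20}).

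The main obstacle is the middle paragraph: one has to re-open the proof of Proposition \ref{Prop3} and make explicit that its stopping-time conditions depend on $\lVert f_0\rVert_s$ only monotonically through $L$, so that a single uniform bound $R$ on the data norms produces a single existence time valid for the whole sequence. Once that uniform time is in hand, the conclusion is a short computation using Proposition \ref{Prop5} together with the a priori bound (\ref{Eq-15A}), with no further delicacy.
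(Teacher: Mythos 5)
Your proof is correct and follows essentially the same two-step strategy as the paper: secure a common existence interval $[0,T]$ for the whole sequence, then conclude with the Gronwall-type stability estimate of Proposition \ref{Prop5} combined with the uniform a priori bound (\ref{Eq-15A}) on $\sup_{t\in[0,T]}\Vert u_n(t)\Vert_{s}$. Your justification of the uniform time---re-running the contraction argument of Proposition \ref{Prop3} with the smallness conditions (\ref{Eq-17}) and (\ref{Eq-18}) evaluated at a uniform bound $R$ on the data norms, using that $L$ is nondecreasing---is in fact more explicit than the paper's appeal to continuity of $T_{n}$ as a function of $\Vert f_{0}^{(n)}\Vert_{s}$ followed by taking a minimum over finitely many indices, but it is the same underlying idea.
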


\begin{proof}
By Proposition \ref{Prop3}, the $T_{n}=T\left(  \left\Vert f_{0}%
^{(n)}\right\Vert _{s},M\right)  >0$ are continuous functions of $\left\Vert
f_{0}^{(n)}\right\Vert _{s}$, then, given $T^{\ast}>0$ there exists
$N\in\mathbb{N}$ such that $T^{\ast}\leq T_{n}$ for all $n\geq N$. We set
$T:=\min\left\{  T^{\ast},T_{1},T_{2},\ldots,T_{N-1}\right\}  $. Therefore,
all the $u_{n}(t)$ are defined on $[0,T]$, furthermore, $u\in\mathcal{X}%
\left(  M,T,f_{0}^{(n)}\right)  $ for all $n$, and
\[
\left\Vert u_{n}(t)\right\Vert _{s}\leq\left\Vert f_{0}^{(n)}\right\Vert
_{s}+M\leq\delta+M,
\]
where $\delta=\sup_{n\in\mathbb{N}}\left\Vert f_{0}^{(n)}\right\Vert _{s}$.
Now
\[
\sup_{t\in\lbrack0,T]}\left\Vert u_{n}(t)\right\Vert _{s}\leq\delta+M\text{
for all }n\text{, \ and}\sup_{t\in\lbrack0,T]}\Vert u(t)\Vert_{s}\leq
\delta+M.
\]

On the other hand, by reasoning as in the proof of Proposition \ref{Prop5}, we
have%
\[
\left\Vert u_{n}(t)-u(t)\right\Vert _{s}\leq\left\Vert f_{0}^{\left(
n\right)  }-f_{0}\right\Vert _{s}+L(\delta+M,\delta+M)\int\limits_{0}^{t}\Vert
u_{n}(\tau)-u(\tau)\Vert_{s}d\tau,
\]
and by applying Lemma \ref{Lemma_Gronwall}
\[
\left\Vert u_{n}(t)-u(t)\right\Vert _{s}\leq e^{TL(\delta+M,\delta
+M)}\left\Vert f_{0}^{\left(  n\right)  }-f_{0}\right\Vert _{s},
\]
which in turns implies (\ref{Eq-20}).
\end{proof}

\subsection{Proof of the Main result}

The local well-posedness\ of the Cauchy problem (\ref{Cauchy-Problem}) in
$\mathcal{H}_{s}$, $s>n/2+2\delta$, follows from Propositions \ref{Prop3},
\ref{Prop5}, \ref{Prop6}.

\section{\label{Section_5}The Blow-up phenomenon}

In this section, we study the blow-up phenomenon for the solution of the
equation%
\begin{equation}
\left\{
\begin{array}
[c]{ll}%
u_{t}=-\gamma\boldsymbol{D}_{x}^{\alpha}u+F(u)+\boldsymbol{D}_{x}^{\alpha_{1}%
}u^{3}, & x\in\mathbb{Q}_{p}^{n},\ t\in\left[  0,T\right]  ;\\
& \\
u(0)=f_{0}\in\mathcal{H}_{\infty}, &
\end{array}
\right.  \label{5.1}%
\end{equation}
\textup{where }$F(u)=-u^{3}+\left(  \beta+1\right)  u^{2}-\beta u$. We will
say that a non-negative solution $u(x,t)\geq0$ of (\ref{5.1}) blow-up in a
finite time $T>0$, if $\lim_{t\rightarrow T^{-}}\sup_{x\in\mathbb{Q}_{p}^{n}%
}u(x,t)=+\infty$. This limit makes sense since $\mathcal{H}_{\infty
}(\mathbb{Q}_{p}^{n},\mathbb{C})$ is continuously embedded in $C_{0}%
(\mathbb{Q}_{p}^{n},\mathbb{C})$, \cite[Theorem 10.15 ]{KKZuniga}.

\subsection{$p-$adic wavelets and pseudo-differential operators}

We denote by $C(\mathbb{Q}_{p},\mathbb{C})$ the $\mathbb{C}-$vector space of
\ continuous $\mathbb{C}-$valued functions defined on $\mathbb{Q}_{p}$.

We fix a function $\mathfrak{a}:\mathbb{R}_{+}\rightarrow\mathbb{R}_{+}$ and
define the pseudo-differential operator%
\[%
\begin{array}
[c]{ccc}%
\mathcal{D} & \rightarrow & C(\mathbb{Q}_{p},\mathbb{C})\cap L^{2}\\
&  & \\
\varphi & \rightarrow & \boldsymbol{A}\varphi,
\end{array}
\]
where $\left(  \boldsymbol{A}\varphi\right)  \left(  x\right)  =\mathcal{F}%
_{\xi\rightarrow x}^{-1}\left\{  \mathfrak{a}\left(  \left\vert \xi\right\vert
_{p}\right)  \mathcal{F}_{x\rightarrow\xi}\varphi\right\}  $.

The set of functions $\left\{  \Psi_{rnj}\right\}  $ defined as%
\begin{equation}
\Psi_{rnj}\left(  x\right)  =p^{\frac{-r}{2}}\chi_{p}\left(  p^{-1}j\left(
p^{r}x-n\right)  \right)  \Omega\left(  \left\vert p^{r}x-n\right\vert
_{p}\right)  , \label{eq4}%
\end{equation}
where $r\in\mathbb{Z}$, $j\in\left\{  1,\cdots,p-1\right\}  $, and $n$ runs
through a fixed set of representatives of $\mathbb{Q}_{p}/\mathbb{Z}_{p}$, is
an orthonormal basis of $L^{2}(\mathbb{Q}_{p})$ consisting of eigenvectors of
operator $\boldsymbol{A}$:%
\begin{equation}
\boldsymbol{A}\Psi_{rnj}=\mathfrak{a}(p^{1-r})\Psi_{rnj}\text{ for any
}r\text{, }n\text{, }j\text{,} \label{eq5}%
\end{equation}
see e.g. \cite[Theorem 3.29]{KKZuniga}, \cite[Theorem 9.4.2]{A-K-S}.\ Notice
that%
\[
\widehat{\Psi}_{rnj}\left(  \xi\right)  =p^{\frac{r}{2}}\chi_{p}\left(
p^{-r}n\xi\right)  \Omega\left(  \left\vert p^{-r}\xi+p^{-1}j\right\vert
_{p}\right)  ,
\]
and then%
\[
\mathfrak{a}\left(  \left\vert \xi\right\vert _{p}\right)  \widehat{\Psi
}_{rnj}\left(  \xi\right)  =\mathfrak{a}(p^{1-r})\widehat{\Psi}_{rnj}\left(
\xi\right)  .
\]

In particular, $\boldsymbol{D}_{x}^{\alpha}\Psi_{rnj}=p^{\left(  1-r\right)
\alpha}\Psi_{rnj}$, for any $r,n,j$ and $\alpha>0$, and since $p^{\left(
1-r\right)  \alpha}$,%
\[
\boldsymbol{D}_{x}^{\alpha}\operatorname{Re}\left(  \Psi_{rnj}\right)
=p^{\left(  1-r\right)  \alpha}\operatorname{Re}\left(  \Psi_{rnj}\right)
\text{, }\boldsymbol{D}_{x}^{\alpha}\operatorname{Im}\left(  \Psi
_{rnj}\right)  =p^{\left(  1-r\right)  \alpha}\operatorname{Im}\left(
\Psi_{rnj}\right)  .
\]
And,%
\begin{align*}
\left\{  \Psi_{rn1}\left(  x\right)  \right\}  ^{2}  &  =p^{-r}\chi_{p}\left(
2p^{-1}\left(  p^{r}x-n\right)  \right)  \Omega\left(  \left\vert
p^{r}x-n\right\vert _{p}\right) \\
&  =p^{r}\left\{  \Psi_{rn1}\left(  x\right)  \right\}  ^{2}=p^{\frac{r}{2}%
}\Psi_{rn2}\left(  x\right)  ,
\end{align*}
then
\[
\boldsymbol{D}_{x}^{\alpha}\operatorname{Re}\left(  \left\{  \Psi_{rn1}\left(
x\right)  \right\}  ^{2}\right)  =p^{\frac{r}{2}}p^{\left(  1-r\right)
\alpha}\operatorname{Re}\left(  \Psi_{rn2}(x)\right)  =p^{\left(  1-r\right)
\alpha}\operatorname{Re}\left(  \left\{  \Psi_{rn1}\left(  x\right)  \right\}
^{2}\right)  .
\]

\subsection{The blow-up}

In this section, we assume that $u(x,t)$ is real-valued non-negative solution
of the Cauchy problem (\ref{Cauchy-Problem}) in $\mathcal{H}_{\infty}$. We set
$w(x):=\operatorname{Re}\left(  \left\{  \Psi_{rn1}\left(  x\right)  \right\}
^{2}\right)  $, so $\boldsymbol{D}_{x}^{\alpha}w(x)=p^{\left(  1-r\right)
\alpha}w(x)$. Thus $w(x)dx$ defines a (positive) measure. We also set
$G(t):=\int_{\mathbb{Q}_{p}}u(x,t)w(x)dx$, where $u(x,t)$ is a positive
solution of (\ref{5.1}), then%
\begin{gather}
G^{\prime}(t)=\int\limits_{\mathbb{Q}_{p}}u_{t}(x,t)w(x)dx=-\gamma
\int\limits_{\mathbb{Q}_{p}}(\boldsymbol{D}_{x}^{\alpha}%
u)(x,t)w(x)dx\nonumber\\
+\int\limits_{\mathbb{Q}_{p}}F(u(x,t))w(x)dx+\int\limits_{\mathbb{Q}_{p}%
}(\boldsymbol{D}_{x}^{\alpha_{1}}u^{3})(x,t)w(x)dx. \label{Identity_2}%
\end{gather}
Now, by using that $\boldsymbol{D}_{x}^{\alpha}u(\cdot,t)$, $w\in L^{2}$, and
$F(u(\cdot,t))$, $\boldsymbol{D}_{x}^{\alpha_{1}}u^{3}(\cdot,t)\in L^{2}$
since for $s>n/2$, $\mathcal{H}_{s}$ is a Banach algebra contained in $L^{2}%
$\ cf. Proposition \ref{Prop1}, and applying the Parseval-Steklov theorem, we
get (\ref{Identity_2}) can be rewritten as
\[
G^{\prime}(t)=\int\limits_{\mathbb{Q}_{p}}\left(  -\gamma p^{\left(
1-r\right)  \alpha}u(x,t)+F(u(x,t))+p^{\left(  1-r\right)  \alpha_{1}}%
u^{3}(x,t)\right)  w(x)dx.
\]
Since the function $H(y)=-\gamma p^{\left(  1-r\right)  \alpha}%
y+F(y)+p^{\left(  1-r\right)  \alpha_{1}}y^{3}$ is convex because
\[
H^{\prime\prime}(y)=-6y+2\left(  \beta+1\right)  +p^{\left(  1-r\right)
\alpha_{1}}6y=6y\left(  p^{\left(  1-r\right)  \alpha_{1}}-1\right)  +2\left(
\beta+1\right)  \geq0,
\]
for $y\geq0$, and $r\leq0$, we can use the Jensen's inequality to get
$G^{\prime}(t)\geq H(G(t))$, then the function $G(t)$ can not remain finite
for every $t\in\lbrack0,\infty)$. Then there exists $T\in(0,\infty)$ such that
$\lim_{t\rightarrow T^{-}}G(t)=+\infty$, hence $u(x,t)$ blow ups at the time
$T$. Then we have established the following result:

\begin{theorem}
\label{Thorem2}Let $u(x,t)$ be a positive solution of (\ref{5.1}). Then there
$T\in\left(  0,+\infty\right)  $ depending on the initial datum such that
$\lim_{t\rightarrow T^{-}}\sup_{x\in\mathbb{Q}_{p}^{n}}u(x,t)=+\infty$.
\end{theorem}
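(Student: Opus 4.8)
The plan is to realize the blow-up by testing the equation against a suitable non-negative function $w$ that is also an eigenfunction of the Taibleson operator, so that the non-local terms $\boldsymbol{D}_x^\alpha u$ and $\boldsymbol{D}_x^{\alpha_1}u^3$ collapse to simple multiples under integration. The natural choice is $w(x) = \operatorname{Re}\big(\{\Psi_{rn1}(x)\}^2\big)$ for some fixed scale $r \le 0$ (and fixed $n$), which by the wavelet calculus of Section~\ref{Section_5} satisfies $\boldsymbol{D}_x^\alpha w = p^{(1-r)\alpha}w$ and, crucially, is a non-negative function (being a scalar multiple of $\operatorname{Re}(\Psi_{rn2})$, hence $w(x)\,dx$ is a positive measure). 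First I would set $G(t) := \int_{\mathbb{Q}_p} u(x,t)\,w(x)\,dx$ and differentiate under the integral sign; this is legitimate because $u(\cdot,t)\in\mathcal{H}_\infty$ and the equation gives $u_t$ in terms of objects lying in $\mathcal{H}_s \subset L^2$ for every $s$, so all the integrands are in $L^2$ and we may move the $t$-derivative inside.

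Next, I would use the self-adjointness of the Fourier transform (Parseval--Steklov) to transfer the pseudo-differential operators onto $w$: $\int (\boldsymbol{D}_x^\alpha u)\,w\,dx = \int u\,(\boldsymbol{D}_x^\alpha w)\,dx = p^{(1-r)\alpha}\int u\,w\,dx$, and similarly $\int (\boldsymbol{D}_x^{\alpha_1}u^3)\,w\,dx = p^{(1-r)\alpha_1}\int u^3\,w\,dx$. This rewrites $G'(t) = \int_{\mathbb{Q}_p} H(u(x,t))\,w(x)\,dx$ with $H(y) = -\gamma p^{(1-r)\alpha}y + F(y) + p^{(1-r)\alpha_1}y^3 = -\gamma p^{(1-r)\alpha}y - u^3 + (\beta+1)y^2 - \beta y + p^{(1-r)\alpha_1}y^3$ (using $F(y)=-y^3+(\beta+1)y^2-\beta y$). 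The point of choosing $r\le 0$ is that then $p^{(1-r)\alpha_1}\ge 1$, so $H''(y) = 6y(p^{(1-r)\alpha_1}-1) + 2(\beta+1) \ge 0$ for all $y \ge 0$; that is, $H$ is convex on $[0,\infty)$.

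Then I would apply Jensen's inequality. After normalizing $w$ so that $\int_{\mathbb{Q}_p} w(x)\,dx = 1$ (a harmless rescaling of $w$), we get $G'(t) = \int H(u(x,t))\,w(x)\,dx \ge H\big(\int u(x,t)\,w(x)\,dx\big) = H(G(t))$. Since $H$ has a strictly positive leading coefficient $p^{(1-r)\alpha_1} - 1 \ge 0$ --- actually to force blow-up I want $p^{(1-r)\alpha_1} > 1$, i.e. I should take $r < 1$, say $r \le 0$, and also ensure the cubic coefficient is strictly positive, so $r < 1$ --- the differential inequality $G'(t) \ge H(G(t))$ with $H$ growing superlinearly (in fact cubically) at $+\infty$ forces $G$ to escape to $+\infty$ in finite time: comparing with the ODE $g' = H(g)$, whose solutions satisfy $\int_{g(0)}^{\infty} \frac{dy}{H(y)} < \infty$, yields a finite blow-up time $T \le \int_{G(0)}^\infty \frac{dy}{H(y)}$, provided $G(0)$ is large enough that $H$ is increasing and positive from there on (which one can always arrange, or alternatively note that once $G$ exceeds the largest root of $H$ it is strictly increasing and the superlinear growth takes over). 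Finally, since $u(\cdot,t)\in\mathcal{H}_\infty \hookrightarrow C_0(\mathbb{Q}_p^n,\mathbb{C})$, we have $G(t) \le \|w\|_{L^1}\sup_x u(x,t) = \sup_x u(x,t)$, so $G(t)\to+\infty$ forces $\sup_x u(x,t)\to+\infty$ as $t\to T^-$, which is the claimed blow-up.

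The main obstacle, and the place where care is genuinely needed, is the interplay between the sign conditions and the choice of $r$: one must simultaneously guarantee (a) $w \ge 0$ so that $w\,dx$ is a probability measure and Jensen applies, (b) $p^{(1-r)\alpha_1} \ge 1$ (so $r \le 0$) to make $H$ convex on $[0,\infty)$, and (c) that $H$ actually drives blow-up rather than being dominated by the dissipative linear term $-\gamma p^{(1-r)\alpha}y$ --- which is automatic once the cubic coefficient $p^{(1-r)\alpha_1}-1$ is strictly positive, since then $H(y)/y^3 \to p^{(1-r)\alpha_1}-1 > 0$ and $\int^\infty dy/H(y) < \infty$. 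A secondary subtlety is justifying the differentiation of $G$ under the integral and the use of Parseval; both rest on the regularity $u(\cdot,t)\in\mathcal{H}_\infty$ together with Proposition~\ref{Prop1} (so that $u^3$, $F(u)$ and the images under $\boldsymbol{D}^\alpha$, $\boldsymbol{D}^{\alpha_1}$ all lie in $L^2$), which the hypothesis $f_0 \in \mathcal{H}_\infty$ in \eqref{5.1} is designed to supply, but one should remark that local well-posedness in $\mathcal{H}_\infty$ (propagation of the $\mathcal{H}_s$ bounds for all $s$) is what keeps the solution in $C_0$ up to the blow-up time.
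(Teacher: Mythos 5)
Your proposal is essentially the paper's own proof: the same test function $w=\operatorname{Re}\bigl(\{\Psi_{rn1}\}^{2}\bigr)$ with the eigenvalue identity $\boldsymbol{D}_{x}^{\alpha}w=p^{(1-r)\alpha}w$, the same quantity $G(t)=\int u\,w\,dx$ differentiated and treated via Parseval--Steklov, the same convex function $H$ with $H''(y)=6y\bigl(p^{(1-r)\alpha_{1}}-1\bigr)+2(\beta+1)\geq0$, and the same Jensen step yielding $G'(t)\geq H(G(t))$ and finite-time escape of $G$. You are in fact slightly more explicit than the paper about the normalization of $w$ needed for Jensen and about the ODE-comparison condition under which $G'\geq H(G)$ actually forces blow-up, but the route is the same.
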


\section{\label{Section_6}Numerical Simulations}

In this section, we present two numerical simulations for the solution of
problem (\ref{5.1}) (in dimension one) for a suitable initial datum. We solve
and visualize (using a heat map) the radial profiles of the solution of
(\ref{5.1}). We consider equation (\ref{5.1}) for radial functions
$u(x,\cdot)$. In \cite{Kochubei-1}, Kochubei obtained a formula for
$\boldsymbol{D}_{x}^{\alpha}u\left(  x,t\right)  $ as an absolutely convergent
real series, we truncate this series and then we apply the classic Euler
Forward Method (see e.g. \cite{Euler}) to find the values of $u(p^{-ord(x)}%
,t)$, when $-20\leq ord(x)\leq20$ (vertical axis) and when $t=\{t_{k}%
:\,\,t_{k}=1/k,k=1,\dots,300\}$ (horizontal axis). \ In Figure 1, on the left,
the heat map of the numerical solution of the homogeneous equation
$u_{t}(x,t)=-\boldsymbol{D}_{x}^{\alpha}u(x,t)$ with initial data
$u(x,0)=4e^{-p^{|ord(x)|}/100}$ (Gaussian bell type), and parameters $p=3$,
$\alpha=0.2$, $\gamma=1$. On the right side, we have the numerical solution of
the equation $u_{t}(x,t)=-\boldsymbol{D}_{x}^{\alpha}u(x,t)-u^{3}%
(x,t)+(\beta+1)u^{2}(x,t)-\beta u(x,t)+\boldsymbol{D}_{x}^{\alpha_{1}}%
u^{3}(x,t)$, with $p=3$, $\alpha=0.2$, $\alpha_{1}=0.1$, and $\beta=0.7$. 

\begin{figure}[pth]
\hskip 4cm \epsfxsize=6cm \epsfbox{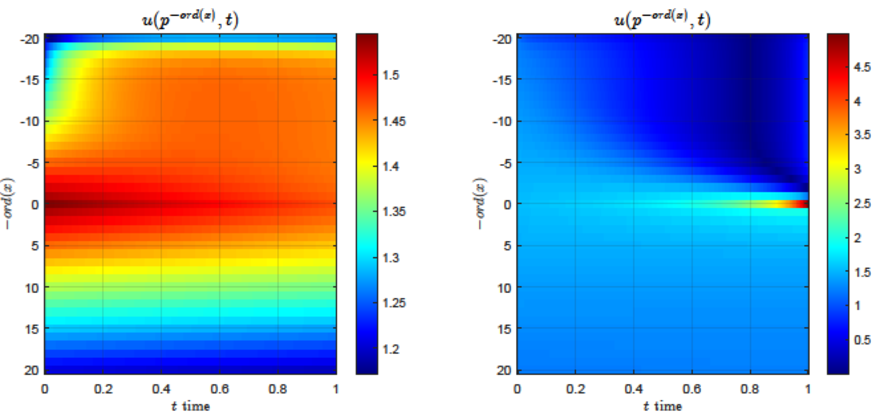}\end{figure}

On the left side of the Figure 1, we observe that the solution $u$ is
uniformly decreasing with respect to the variable $t$. This behavior is
typical for solutions of diffusion equations. These equations have been
extensively studied, see e.g. \cite{KKZuniga}, \cite{Zuniga-LNM-2016} and the
references therein.

On the right side of Figure 1, we see that the evolution of $u(x,t)$ is
controlled by the diffusion term $-\boldsymbol{D}_{x}^{\alpha}u(x,t)$, up to a
time $T$ (blow-up time), this behavior is similar to that described above.
When $t>T$, the reactive term $-u^{3}(x,t)+(\beta+1)u^{2}(x,t)-\beta
u(x,t)+\boldsymbol{D}_{x}^{\alpha_{1}}u^{3}(x,t)$ takes over and $u(x,t)$
grows rapidly towards infinity.

The method converges quite fast, but still lacks a mathematical formalism to
support it, for this reason we refer to it as a numerical simulation of the solution.

\end{document}